\newtheorem{thm}{Theorem}[section]
\newtheorem{lem}[thm]{Lemma}
\newtheorem{cor}[thm]{Corollary}
\theoremstyle{definition}
\newtheorem{defn}[thm]{Definition}
\theoremstyle{remark}
\newtheorem{example}[thm]{Example}
\newtheorem{remark}[thm]{Remark}
\newcommand{\NN}{\mathbb{N}}
\newcommand{\RR}{\mathbb{R}}  
\newcommand{\norm}[1]{\left\| {#1} \right\|}
\newcommand{\seq}[1]{\left\{ {#1} \right\}}
\newcommand{\pair}[1]{\langle {#1} \rangle}
\begin{document}

\title[Rates of convergence for weakly contractive mappings]{Rates of convergence for asymptotically weakly contractive mappings in normed spaces}

\author{Thomas Powell}
\address{Department of Computer Science, University of Bath}
\email{trjp20@bath.ac.uk}

\author{Franziskus Wiesnet}
\address{Department of Mathematics, University of Trento}
\email{Franziskus.Wiesnet@unitn.it}

\begin{abstract}
We study Krasnoselskii-Mann style iterative algorithms for approximating fixpoints of asymptotically weakly contractive mappings, with a focus on providing generalised convergence proofs along with explicit rates of convergence. More specifically, we define a new notion of being asymptotically $\psi$-weakly contractive with modulus, and present a series of abstract convergence theorems which both generalise and unify known results from the literature. Rates of convergence are formulated in terms of our modulus of contractivity, in conjunction with other moduli and functions which form quantitative analogues of additional assumptions that are required in each case. Our approach makes use of ideas from proof theory, in particular our emphasis on abstraction and on formulating our main results in a quantitative manner. As such, the paper can be seen as a contribution to the \emph{proof mining} program.
\end{abstract}

\maketitle





\section{Introduction}
\label{sec:intro}

Let $X$ be a real normed space and $T: E\to X$ a mapping defined on some closed convex set $E\subseteq X$. We say that $T$ is strongly contractive if there exists some $k\in (0,1)$ such that
\begin{equation*}
\norm{Tx-Ty}\leq (1-k)\norm{x-y}
\end{equation*}
for all $x,y\in E$. It is well known by Banach's fixed point theorem that the Picard iteration scheme $x_{n+1}=Tx_n$ converges to the unique fixpoint $q$ of $T$ with rate of convergence
\begin{equation*}
\norm{x_n-q}\leq \frac{(1-k)^n}{k}\norm{x_1-x_0}
\end{equation*}
Many important results in functional analysis are based on establishing the existence of a fixpoint together with convergence towards this fixpoint for more general classes of contractive mappings and approximation schemes. This paper will be based around one such class, the so-called weakly contractive mapping introduced by Alber and Guerre-Delabriere in \cite{alber-guerredelabriere:97:weaklycontractive}. To be more precise, a mapping $T$ is $\psi$-weakly contractive if there exists a nondecreasing function $\psi:[0,\infty)\to [0,\infty)$ which is positive on $(0,\infty)$ with $\psi(0)=0$, such that
\begin{equation*}
\norm{Tx-Ty}\leq \norm{x-y}-\psi(\norm{x-y})
\end{equation*}
for all $x,y\in E$. This generalises the class of strongly contractive mappings -- which correspond to the simple case $\psi(t)=kt$ for $k\in (0,1)$ -- and in turn represent a special uniform instance of the class of \emph{contractive} mappings which satisfy
\begin{equation*}
x\neq y\implies \norm{Tx-Ty}<\norm{x-y}
\end{equation*}
In \cite{alber-guerredelabriere:97:weaklycontractive} it is shown that under certain additional assumptions, weakly contractive mappings possess unique fixpoints $q$ which are approximated by the Picard iteration scheme. This is achieved by observing that the sequence $\lambda_n:=\norm{x_n-q}$ satisfies the recursive inequality
\begin{equation}
\label{eqn:recin:simple}
\lambda_{n+1}\leq \lambda_n-\psi(\lambda_n)
\end{equation}
from which it follows that $\lambda_n\to 0$ with the rate of convergence
\begin{equation*}
\lambda_n\leq \Phi^{-1}(\Phi(\lambda_0)-(n-1))
\end{equation*}
for $\Phi(s):=\int^s dt/\psi(t)$. Many variations on this basic result have been explored, starting in \cite{alber-guerredelabriere:97:weaklycontractive} continuing in \cite{alber-chidume-zegeye:06:nonexpansive, alber-guerredelabriere:01:projection, alber-reich-yao:03:weaklycontractive, chidume-zegeye-aneke:02:dweakly}, to name just a few. In these papers, one considers variants of the more general Krasnoselskii-Mann scheme
\begin{equation}
\label{eqn:mann:iteration}
x_{n+1}=(1-\alpha_n)x_n+\alpha_n Tx_n
\end{equation} 
where $\seq{\alpha_n}$ is some sequence of positive reals satisfying $\sum_{n=0}^\infty \alpha_n=\infty$ (setting $\alpha_n=1$ we regain the Picard iteration as a special case). A wide range of convergence theorems have been established, based on different modifications of the basic parameters, namely the mapping $T$, the iterative algorithm $\seq{x_n}$ and the underlying space $X$. For example, \cite{alber-chidume-zegeye:06:nonexpansive} introduces a class of asymptotically weakly contractive mappings which satisfy the more general property
\begin{equation*}
\norm{T^nx-T^ny}\leq \norm{x-y}-\psi(\norm{x-y})+k_n\phi(\norm{x-y})+l_n
\end{equation*} 
for $k_n,l_n\to 0$ as $n\to\infty$, and where certain bounding assumptions are required to establish convergence. Another variation is considered in \cite{chidume-zegeye-aneke:02:dweakly}, which investigates so-called $d$-weakly contractive mappings, based on the contractivity condition that for any $x,y\in E$ there exists some $j\in J(x-y)$ such that
\begin{equation*}
\pair{Tx-Ty,j}\leq \norm{x-y}^2-\psi(\norm{x-y})
\end{equation*}
where here the underlying space $X$ is uniformly smooth and $J$ is the normalized duality mapping. Finally, one can modify the iterative scheme: For example, \cite{alber-reich-yao:03:weaklycontractive} considers the perturbed iteration
\begin{equation*}
x_{n+1}=Q_n((1-\alpha_n)x_n+\alpha_nTx_n)
\end{equation*}
for sunny nonexpansive retractions $Q_n:X\to E_n$. Here, in order to establish convergence we require a number of additional properties, including uniform smoothness of $X$ and stability of the sequence $\seq{E_n}$ with respect to the Hausdorff metric. 

In each of the above cases, convergence proofs tend to involve a strategy which reduces the problem via a series of (often quite intricate) steps the following abstract recursive inequality:
\begin{equation*}
\lambda_{n+1}\leq \lambda_n-\alpha_n\psi(\lambda_n)+\gamma_n
\end{equation*}
where $\gamma_n/\alpha_n\to 0$. One can then appeal to a standard results from the theory of recursive inequalities to establish $\lambda_n\to 0$. Rates of convergence become more difficult to formulate in comparison to convergence proofs which can be reduced to the simpler inequality (\ref{eqn:recin:simple}), and are typically dependent in a subtle way on parameters which arise from the various additional assumptions that are required for convergence. 

In this paper, we use ideas from proof theory to establish a series of general convergence theorems for classes of weakly contractive mappings. These not only strengthen the aforementioned results by weakening parameters and introducing suitably abstract formulations of key properties, but bring them together as part of a unifying scheme. In addition, we provide concrete rates of convergence in all cases, appealing to the appropriate proof-theoretic moduli in order to formulate them properly.

\subsection{Applied proof theory}

Our paper can be viewed as a contribution to the logic based \emph{proof mining} program, which applies ideas and techniques from logic to mathematical proofs, with the aim of both generalising those proofs and extracting quantitative data such as rates of convergence. A comprehensive background to the area is presented in \cite{kohlenbach:08:book}, and the survey papers \cite{kohlenbach:19:nonlinear:icm,kohlenbach-oliva:03:systematic} provide a good overview of some of the many applications in analysis. Proof mining continues to be expanded to new areas of mathematics: In recent years this includes pursuit-evasion games \cite{kohlenbach-lopezacedo-nicolae:21:lionman}, differential algebra \cite{simmons:towsner:19:polyrings} and Tauberian theory \cite{powell:20:tauberian}.

No prior familiarity of proof mining is assumed in this paper, and in particular, we do not explicitly mention, nor require knowledge of, any concepts from logic. That being said, our paper bears a number of important hallmarks which betray its implicit proof theoretic flavour.

Firstly, we make use of ``proof-theoretic moduli'' in place of the traditional moduli one typically encounters in analysis, the former being based on the \emph{logical} structure of the property in question. For example, a rate of convergence in our sense is simply a function $\Psi:(0,\infty)\to \NN$ with the property that for any $\varepsilon>0$ we have $\norm{x_n-q}\leq \varepsilon$ for all $n\geq \Psi(\varepsilon)$, as opposed to a function $F:\NN\to (0,\infty)$ satisfying $\norm{x_n-q}\leq F(n)$ for all $n\in\NN$ and $F(n)\to 0$ as $n\to\infty$. We work with similar moduli for representing properties such as uniform continuity or uniform smoothness, together with new moduli defined for this first time here which represent sequences of mappings which are asymptotically weakly contractive. By the using them to represent assumptions in our theorems, we are able to extract rates of convergence over the structure of proofs in terms of these moduli. In fact precisely because the moduli mirror the logical structure of the property in question, they propagate through the logical structure of the convergence proof resulting in the aforementioned rate. In many cases, our rates of convergence can be reformulated and then directly compared to those which have been given in the literature, but we are also able to give explicit rates of convergence for general theorems which, to the best of our knowledge, are new.

Another feature of our approach in that our main results were obtained through the careful analysis of existing proofs, where we sought to dispose of superfluous assumptions (which, for instance, were only required to establish \emph{existence} of a fixed point) and provide abstract versions of others, resulting in generalised theorems in which certain premises have been weakened and others eliminated altogether. As a direct result of this, it is sometimes the case that distinct theorems which appear separately in the literature can each be presented as direct corollaries of our results. In this way, our work forms a unifying scheme in which different convergence theorems can be classified and compared within a single framework. 

Finally, as a result of our effort to give suitable abstract presentations of key properties, we present several new classes of mappings which generalise various notions of being $\psi$-weakly contractive. For instance, in Section \ref{sec:cs1} we formulate our main result in terms of what we call quasi asymptotically weakly contractive mappings, a class which contains the totally asymptotically weakly contractive mappings of Alber et al. \cite{alber-chidume-zegeye:06:nonexpansive} and also the sequences of approximate weakly contractive mappings studied in \cite{alber-guerredelabriere:01:projection}. Similarly, in Section \ref{sec:cs2} we present as asymptotic counterpart of the class of $d$-weakly contractive mappings.

In this paper we focus on a relatively small number of representative case studies which exemplify our proof theoretic approach to convergence theorems for mappings of weakly contractive type. It is certainly the case that there are plenty of other classes of mappings and associated convergence theorems which could be abstracted and generalised in a similar fashion, and we leave an exploration of more recent results in this area to future work (cf. Section \ref{sec:conc}).

\section{Basic moduli}
\label{sec:basic}

We begin by introducing some of the proof-theoretic moduli which form the core of this paper, and in doing so briefly outline our abstract way of looking at classes of weakly contractive mappings.

\subsection{Rates of convergence and divergence}
\label{sec:basic:convdiv}

The central concept of this paper is the rate of convergence.

\begin{defn}
\label{def:rate:conv}
Suppose that $\seq{x_n}$ is a sequence in some metric space $(X,d)$ with $\lim_{n\to\infty} x_n= q$. A rate of convergence for $\seq{x_n}$ is any function $f:(0,\infty)\to\NN$ with the property that for any $\varepsilon>0$ we have $d(x_n,q)\leq \varepsilon$ for all $n\geq f(\varepsilon)$. 
\end{defn}
\begin{example}
If $r\in (0,1)$ is such that $d(x_{n+1},q)\leq r d(x_n,q)$ for all $n\in\NN$ then $\seq{x_n}$ converges with rate
\begin{equation*}
f(\varepsilon):=\frac{\log(\varepsilon)-\log(d(x_0,q))}{\log(r)}
\end{equation*}
\end{example}
Because we consider algorithms based on iterative schemes of the form (\ref{eqn:mann:iteration}) which rely on the condition that the sum of the coefficients $\sum_{n=0}^\infty\alpha_n$ diverges, we need a corresponding modulus for this property.
\begin{defn}
\label{def:rate:div}
Suppose that $\seq{\alpha_n}$ is a sequence of nonnegative reals such that $\sum_{n=0}^\infty\alpha_n=\infty$. A rate of divergence for this series is a function $r:\NN\times(0,\infty)\to (0,\infty)$ such that for all $N\in\NN$ and $x\in (0,\infty)$ we have $r(N,x)\geq N$ and
\begin{equation*}
\sum_{n=N}^{r(N,x)}\alpha_n > x
\end{equation*}
\end{defn} 
\begin{example}
If $\alpha_n=1$ for all $n\in\NN$ -- in which case the Mann scheme (\ref{eqn:mann:iteration}) would reduce to the usual Picard iteration -- a rate of divergence is given by
\begin{equation*}
r(N,x):=\lceil x+N\rceil
\end{equation*}
\end{example}
\begin{remark}
An alternative way of representing divergence of a series would be via a function $g:(0,\infty)\to \NN$ satisfying
\begin{equation*}
\sum_{n=0}^{g(x)}\alpha_n> x
\end{equation*}
in which case, a rate of divergence in our sense could be defined as
\begin{equation*}
r(N,x):=g\left(x+\sum_{n=0}^{N-1}\alpha_n\right)
\end{equation*}
However, in what follows we will work directly with the modulus $r$.
\end{remark}

\subsection{Moduli for smooth and uniformly smooth spaces}
\label{sec:basic:smooth}

A Banach space $X$ is said to be uniformly smooth if for any $\varepsilon>0$ there exists some $\delta>0$ such that for all $x,y\in X$ with $\norm{x}=1$ and $\norm{y}\leq \delta$ we have
\begin{equation}
\label{eqn:unifsmooth}
\norm{x+y}+\norm{x-y}\leq 2+\varepsilon\norm{y}
\end{equation}
A function $\tau:(0,\infty)\to (0,\infty)$ which for any input $\varepsilon$ returns a $\delta$ satisfying (\ref{eqn:unifsmooth}) is called a modulus of uniform smoothness. This is not to be confused with the so-called modulus of smoothness, which is defined by
\begin{equation*}
\rho_X(\delta):=\sup\left\{\frac{\norm{x+y}+\norm{x-y}}{2}-1\; : \; \norm{x}=1, \norm{y}=\delta\right\}
\end{equation*}
and satisfies 
\begin{equation*}
\lim_{\delta\to 0}\frac{\rho_X(\delta)}{\delta}=0
\end{equation*}
iff $X$ is uniformly smooth.

\subsection{Notions of contractivity}
\label{sec:basic:contr}

Let $X$ be a normed space and $T:E\to X$ a mapping. We begin our discussion of contractivity by presenting and comparing three closely related notions:
\begin{defn}
\label{defn:contr}
\begin{enumerate}[(a)]

\item\label{defn:contr:basic} $T$ is contractive if for all $x,y\in E$:
\begin{equation*}
x\neq y\implies \norm{Tx-Ty}<\norm{x-y}
\end{equation*}

\item\label{defn:contr:modulus} $T$ is contractive with modulus $\tau:(0,\infty)\to (0,\infty)$ if for all $x,y\in E$ and $\varepsilon>0$ we have
\begin{equation*}
\norm{x-y}\geq \varepsilon\implies \norm{Tx-Ty}+\tau(\varepsilon)\leq \norm{x-y}
\end{equation*}

\item\label{defn:contr:weakly} $T$ is weakly contractive if there exists some nondecreasing map $\psi:[0,\infty)\to [0,\infty)$ which is positive on $(0,\infty)$ and has $\psi(0)=0$ such that for all $x,y\in E$ we have
\begin{equation*}
\norm{Tx-Ty}\leq \norm{x-y}-\psi(\norm{x-y})
\end{equation*}

\end{enumerate}
\end{defn}
Contractive mappings in the sense of (\ref{defn:contr:basic}) have been widely studied, also in the more general setting of metric spaces, where Edelstein's fixed point theorem \cite{edelstein:62:contractive} states that whenever $T:E\to E$ for some compact $E$, then $T$ has a unique fixed point. For the other notions of contractivity, it is clear that if $T$ is weakly contractive w.r.t. $\psi$ then it is also contractive with modulus $\psi$, and if $T$ is contractive with any modulus, it is also contractive, and thus we easily arrive at
\begin{equation*}
\mbox{(\ref{defn:contr:weakly})}\implies\mbox{(\ref{defn:contr:modulus})}\implies \mbox{(\ref{defn:contr:basic})}
\end{equation*}
The other directions are more interesting. Contractivity in the sense of (\ref{defn:contr:basic}) can be formulated more explicitly by the logical formula
\begin{equation}
\label{eqn:contractive}
\forall x,y\in E\; \forall \varepsilon>0\; \exists \delta>0\; (\norm{x-y}\geq \varepsilon\implies \norm{Tx-Ty}+\delta\leq \norm{x-y})
\end{equation}
and a contractive mappings possesses a modulus precisely when for each $\varepsilon>0$ there exists a $\delta>0$ satisfying (\ref{eqn:contractive}) uniformly in the parameters $x,y\in E$ (a standard argument using sequential compactness shows that this is always the case when, for example, $E$ is compact). Thus a contractive mapping with a modulus is one that is contractive in a uniform way, and such mappings form a particularly elegant class to study, especially from a quantitative perspective. Indeed, in \cite{kohlenbach-oliva:03:systematic} it is shown that in the case that $E$ is compact, a modulus of contractivity can be characterised proof theoretically as the so-called monotone functional interpretation of the statement that $T$ is contractive, and a such a modulus is used to formulate a rate of convergence for Edelstein's fixed point theorem. A particular kind of contractive mapping with modulus -- so-called almost uniform contractions -- is also considered in the context of Bishop-style constructive analysis in \cite{bridges-etal:92:contractive}. 

In the case that a modulus of contractivity $\tau$ for $T$ is nondecreasing and strictly positive, then $T$ is also weakly contractive w.r.t. $\tau$. Otherwise, defining $\psi(0)=0$ and
\begin{equation*}
\psi(\varepsilon):=\inf\{\tau(\mu)\; : \; \varepsilon\leq \mu\},
\end{equation*} 
whenever this infimum is always strictly positive, we have that $\psi$ is nondecreasing and $T$ is $\psi$-weakly contractive. Thus notions (\ref{defn:contr:modulus}) and (\ref{defn:contr:weakly}) are closely connected. Interestingly however, while contractive mappings with moduli have been studied in proof theoretic approaches to analysis, weakly contractive mappings were introduced quite independently in \cite{alber-guerredelabriere:97:weaklycontractive}. Here, some addition conditions on $\psi$ are assumed, which allow us to prove that whenever $X$ is a Hilbert space (or more generally a uniformly smooth Banach space) then $T$ has a fixed point, even in cases where $E$ is not compact.

\subsection{Asymptotically weakly contractive mappings}
\label{sec:basic:asymp}

Rather than simple contractive mappings, we will be interested in the classes of mappings which are weakly contractive in an asymptotic sense. There are many different ways to define such classes. A simple example would mappings $T:E\to X$ which satisfy the property
\begin{equation}
\label{eqn:asymp:contr:simple1}
\norm{T^nx-T^ny}\leq (1+k_n)\norm{x-y}-\psi(\norm{x-y})
\end{equation}
where $\seq{k_n}$ is some sequence of reals with $k_n\to 0$. Another example would be given by
\begin{equation}
\label{eqn:asymp:contr:simple2}
\norm{T^nx-T^ny}\leq \norm{x-y}-\psi(\norm{x-y})+l_n
\end{equation}
where $l_n\to 0$. More elaborate and general classes which include both of these types are considered in e.g. \cite{alber-chidume-zegeye:06:nonexpansive} and \cite{alber-guerredelabriere:97:weaklycontractive}. What these variants have in common is that they concern sequences of mappings $\seq{A_n}$ which become weakly contractive in the limit (where often this sequence is simply taken to be the iterates of a single mapping i.e. $A_n:=T^n$ for some $T$). In a similar vein to what we have discussed already, we can express this notion as a simple $\varepsilon$/$\delta$ property and provide a corresponding modulus, which measures how quickly such sequences become weakly contractive. 
\begin{defn}
\label{defn:asymp:contr}
Let $A_n:E\to X$ be a sequence of mappings and $\psi:[0,\infty)\to [0,\infty)$ some nondecreasing function with $\psi(0)=0$. We say that the sequence $\seq{A_n}$ is asymptotically $\psi$-weakly contractive if for all $\delta, b>0$ there exists some $m$ such that
\begin{equation*}
\norm{x-y}\leq b\implies \norm{A_nx-A_ny}\leq \norm{x-y}-\psi(\norm{x-y})+\delta
\end{equation*}
for all $n\geq m$. A function $\sigma:(0,\infty)\times (0,\infty)\to \NN$ which produces such an $m$ in arguments $\delta, b$ will be called a modulus of asymptotic $\psi$-weak contractivity. 
\end{defn}
\begin{example}
If $T$ is a mapping which satisfies the simple property (\ref{eqn:asymp:contr:simple1}) then $\seq{T^n}$ is asymptotically $\psi$-weakly contractive in our sense with modulus
\begin{equation*}
\sigma(\delta,b):=f\left(\frac{\delta}{b}\right)
\end{equation*}
where $f$ is a rate of convergence for $k_n\to 0$. Similarly, in the case of property (\ref{eqn:asymp:contr:simple2}) a modulus is given by $\sigma(\delta):=g(\delta)$ for $g$ a rate of convergence for $l_n\to 0$. In the second case, the modulus is independent of $b$: Such moduli play a special role here and will examined more closely later. Note that for ordinary $\psi$-weakly contractive mapping in the sense of Definition \ref{defn:contr} (\ref{defn:contr:weakly}) we can simply set $\sigma(\delta,b)=0$. 
\end{example}
Later on in this paper, we will show that not only do a number of previous formulations of asymptotic weak contractivity form instances of this general definition (cf. Section \ref{sec:cs1}), but existing results from the literature which concern only weakly contractive mappings can be generalised to the asymptotic case (cf. Sections \ref{sec:cs2} and \ref{sec:cs3}). Moreover, Definition \ref{defn:asymp:contr} itself will be further varied and generalised. For example, many (but not all) of the results we prove later work when the relevant notion of asymptotic $\psi$-weak contractivity is replaced by what we will call \emph{quasi} asymptotic $\psi$-weak contractivity relative to some fixed point $q$ i.e.
\begin{equation*}
\norm{x-q}\leq b\implies \norm{A_nx-q}\leq \norm{x-q}-\psi(\norm{x-q})+\delta 
\end{equation*}
\begin{remark}
A related notion of being asymptotically contractive in the setting of arbitrary complete metric spaces $(X,d)$ is given by Kirk \cite{kirk:03:asymptotic-contractions}, where a corresponding convergence result for Picard iterates is proven. This has been analysed from a proof theoretic standpoint first by Gerhardy \cite{gerhardy:06:kirk} and then Briseid \cite{briseid:07:asymptotic}, both of whom develop quantitative notions of being asymptotically contractive similar in spirit to our Definition \ref{defn:asymp:contr} (cf. Definition 2 of \cite{gerhardy:06:kirk} and Definition 2.1 of \cite{briseid:07:asymptotic}). However, our focus here is different from theirs, as we are interested in convergence results for Mann iteration schemes in normed spaces, and the proofs that we analyse have a very different character.
\end{remark}

\section{Quantitative recursive inequalities}
\label{sec:recursive}

All of the proofs in the main part of this paper utilise an abstract theory of recursive inequalities, a quantitative analysis of which is not only crucial in obtaining our rates of convergence, but forms a unifying scheme which, together with the abstract notions of contractivity discussed above, allowing us to bring together several distinct convergence results from the literature. In this section we present the core quantitative convergence results which will be needed later, and also take the opportunity to compare our rates of convergence (formulated in terms of moduli) with those which occur in literature (typically formulated in terms of bounding functions). 

\subsection{Recursive inequalities and asymptotic contractivity}

For illustrative purposes and to motivate the results that follow, let us consider a mapping $T:E\to E$ which satisfies the simplified variant (\ref{eqn:asymp:contr:simple2}) of being total asymptotically weakly contractive, and suppose that $q\in E$ is some fixpoint of $T$. Let us suppose that $\seq{x_n}$ is the standard Mann iterative scheme for approximating this fixpoint i.e. assuming now that $E$ is convex,
\begin{equation*}
x_{n+1} = (1-\alpha_n)x_n+\alpha_nT^nx_n
\end{equation*}
for some sequence $\seq{\alpha_n}$ of reals in $(0,1]$ satisfying $\sum_{n=0}^\infty \alpha_n=\infty$. In order to show that this algorithm convergences strongly to the fixpoint $q$, we observe that
\begin{equation*}
\begin{aligned}
\norm{x_{n+1}-q}&\leq (1-\alpha_n)\norm{x_n-q}+\alpha_n\norm{T^nx_n-T^nq}\\
&\leq (1-\alpha_n)\norm{x_n-q}+\alpha_n(\norm{x_n-q}-\psi(\norm{x_n-q})+l_n)\\
&\leq \norm{x_n-q}-\alpha_n\psi(\norm{x_n-q})+\alpha_nl_n
\end{aligned}
\end{equation*}
and therefore the sequence $\mu_n:=\norm{x_n-q}$ satisfies the following recursive inequality:
\begin{equation}
\label{eqn:recineq:basic}
\mu_{n+1}\leq \mu_n-\alpha_n\psi(\mu_n)+\alpha_nl_n
\end{equation}
It is well known that any positive sequence $\seq{\mu_n}$ satisfying (\ref{eqn:recineq:basic}) must convergence to zero. This follows from a general theory of recursive inequalities of this kind, which have been studied in e.g. \cite{alber:83:inequalities, alber-reich:94:panamerican}. In particular, a detailed account of such convergence results along with proofs is given in \cite{alber-iusem:01:subgradient}, where convergence of $\seq{\mu_n}$ above follows as a special case of their Lemma 2.5 (cf. Remark 2.6 of \cite{alber-iusem:01:subgradient}).

Many convergence theorems involving variants of weakly contractive mappings, involve a similar (though typically much more complex) reduction to this or a similar recursive scheme, and thus our starting point is to provide a computational analysis of the relevant abstract convergence theorems which will be used in later sections.


\subsection{Main quantitative lemmas}

We now formulate some key quantitative convergence results based on an abstract formulation of the recursive inequality (\ref{eqn:recineq:basic}). These bear similarities to the quantitative analysis of the closely related inequality
\begin{equation*}
\label{eqn:recineq:acc}
\mu_{n+1}\leq \mu_n-\alpha_n\psi(\mu_{n+1})+\alpha_nl_n
\end{equation*}
which can be found as \cite[Lemma 1]{kohlenbach-koernlein:11:pseudocontractive} and \cite[Lemma 3.4]{kohlenbach-powell:20:accretive}, but the respective proofs that $\mu_n\to 0$ are somewhat different, and so the results which follow are new. Note that in addition to a rate of convergence, Lemma \ref{res:recineq:first} establishes a new proof that $\mu_n\to 0$, which is somewhat different to that found in e.g. \cite{alber-guerredelabriere:97:weaklycontractive} or \cite{alber-iusem:01:subgradient}.
\begin{lem}
\label{res:recineq:first}
Let $\seq{\mu_n}$ be a sequence of nonnegative real numbers, $\seq{\alpha_n}$ a bounded sequence of positive numbers with $\sum_{n=0}^\infty \alpha_n=\infty$ and $\psi:[0,\infty)\to [0,\infty)$ be a nondecreasing function which is positive on $(0,\infty)$. Suppose that that for all $\delta>0$ there exists some $m\in\NN$ such that 
\begin{equation}
\label{eqn:recineq:abs}
\mu_{n+1}\leq \mu_n-\alpha_n(\psi(\mu_n)-\delta)
\end{equation}
for all $n\geq m$. Then $\mu_n\to 0$. Moreover, if $\alpha>0$, $r:(0,\infty)\times (0,\infty)\to\NN$ and $N:(0,\infty)\to \NN$ are such that
\begin{itemize}

\item $\alpha_n\leq\alpha$ for all $n\in\NN$,

\item $r$ is a rate of divergence for $\sum_{n=0}^\infty\alpha_n$,

\item for any $\delta>0$ the inequality (\ref{eqn:recineq:abs}) holds for $n\geq N(\delta)$,

\end{itemize}
then $\mu_n\to 0$ with the following rate of convergence:
\begin{equation*}
\Phi_{\psi,c,\alpha,r,N}(\varepsilon):=r\left(N\left(\frac{1}{2}\min\left\{\psi\left(\frac{\varepsilon}{2}\right),\frac{\varepsilon}{\alpha}\right\}\right),2\int_{\varepsilon/2}^c \frac{dt}{\psi(t)} \right)+1
\end{equation*}
where $c$ is any upper bound on $\seq{\mu_n}$ (but cf. Remark \ref{rem:uniform:bound}).
\end{lem}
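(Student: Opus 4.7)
The plan is to prove both the qualitative and quantitative parts simultaneously, since a close reading of the rate-extraction argument yields $\mu_n \to 0$ as a by-product. Fix $\varepsilon > 0$ and set
\begin{equation*}
\delta := \tfrac{1}{2}\min\left\{\psi(\varepsilon/2),\, \varepsilon/\alpha\right\}.
\end{equation*}
This choice is engineered so that two complementary regimes become easy to control: when $\mu_n > \varepsilon/2$, the monotonicity of $\psi$ gives $\psi(\mu_n) \geq \psi(\varepsilon/2) \geq 2\delta$, so for $n \geq N(\delta)$ the hypothesis strengthens to $\mu_{n+1} \leq \mu_n - \alpha_n\psi(\mu_n)/2$; whereas when $\mu_n \leq \varepsilon/2$, the hypothesis gives at worst $\mu_{n+1} \leq \mu_n + \alpha_n\delta \leq \varepsilon/2 + \alpha\delta \leq \varepsilon$. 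A short induction on the trap formed by these two bullets shows that as soon as some $n_0 \geq N(\delta)$ satisfies $\mu_{n_0} \leq \varepsilon/2$, one has $\mu_n \leq \varepsilon$ for every $n \geq n_0$ (at each step distinguish whether $\mu_n \leq \varepsilon/2$, handled by the second bullet, or $\mu_n \in (\varepsilon/2,\varepsilon]$, handled by the first).

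It therefore remains to locate such an $n_0$ before index $\Phi_{\psi,c,\alpha,r,N}(\varepsilon)$. For this I would upgrade the decrease estimate, using that $\psi$ is nondecreasing, to
\begin{equation*}
\alpha_n \leq \frac{2(\mu_n - \mu_{n+1})}{\psi(\mu_n)} \leq 2\int_{\mu_{n+1}}^{\mu_n} \frac{dt}{\psi(t)},
\end{equation*}
valid so long as $\mu_n > \varepsilon/2$. Assume for contradiction that $\mu_n > \varepsilon/2$ for every $n \in [N(\delta), K+1]$, where $K := r(N(\delta),\, 2\int_{\varepsilon/2}^c dt/\psi(t))$. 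Telescoping, together with $\mu_{K+1} > \varepsilon/2$ and $\mu_{N(\delta)} \leq c$, then yields
\begin{equation*}
\sum_{n=N(\delta)}^{K} \alpha_n \leq 2\int_{\mu_{K+1}}^{\mu_{N(\delta)}} \frac{dt}{\psi(t)} \leq 2\int_{\varepsilon/2}^{c} \frac{dt}{\psi(t)},
\end{equation*}
contradicting the defining property of $r$. Hence some $n_0 \in [N(\delta), K+1]$ satisfies $\mu_{n_0} \leq \varepsilon/2$, and the trap argument produces $\mu_n \leq \varepsilon$ for all $n \geq K+1 = \Phi_{\psi,c,\alpha,r,N}(\varepsilon)$. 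The bare qualitative claim $\mu_n \to 0$ follows from the very same template, using $\alpha := \sup_n \alpha_n$, applying the hypothesis at $\delta$ to obtain some $m$, and invoking divergence of $\sum \alpha_n$ directly (in place of the explicit rate $r$) to find $n_0 \geq m$ with $\mu_{n_0} \leq \varepsilon/2$.

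The main obstacle I anticipate is the careful bookkeeping around the ``$+1$'' in $\Phi$: one must assume $\mu_n > \varepsilon/2$ throughout $[N(\delta), K+1]$ (and not merely $[N(\delta), K]$) so that $\mu_{K+1}$ itself exceeds $\varepsilon/2$ and the telescoped integral is bounded by $2\int_{\varepsilon/2}^c dt/\psi(t)$; this is precisely what forces the extra unit in the definition of $\Phi$. The integral bound itself is the standard consequence of $\psi$ being nondecreasing, and the choice of $\delta$ is essentially dictated by the two competing constraints that $\psi(\varepsilon/2)$ dominate $2\delta$ and that $\alpha\delta$ not exceed $\varepsilon/2$.
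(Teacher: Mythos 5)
Your proposal is correct and takes essentially the same approach as the paper: choose $\delta$ as half the minimum of $\psi$-value and $\varepsilon/\alpha$, use the integral comparison to derive a contradiction with the rate of divergence $r$, and then run a trap argument by case analysis on whether $\mu_n$ lies below or above the inner threshold. The only (cosmetic) difference is parameterization: you work directly with threshold $\varepsilon/2$ and trap $\varepsilon$, whereas the paper uses threshold $\varepsilon$, proves $\mu_k\leq 2\varepsilon$, and substitutes $\varepsilon\mapsto\varepsilon/2$ at the very end.
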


\begin{proof}
Fix $\varepsilon>0$ and let 
\begin{equation*}
N_0:=N\left(\min\left\{\frac{\psi(\varepsilon)}{2},\frac{\varepsilon}{\alpha}\right\}\right).
\end{equation*}
Then for all $n\geq N_0$ we have both
\begin{equation}
\label{eqn:recineq:use1}
\mu_{n+1}\leq \mu_n-\alpha_n\left(\psi(\mu_n)-\frac{\psi(\varepsilon)}{2}\right)
\end{equation}
and (using that $\alpha_n\leq \alpha$)
\begin{equation}
\label{eqn:recineq:use2}
\mu_{n+1}\leq \mu_n-\alpha_n\psi(\mu_n)+\varepsilon.
\end{equation}
Now let $l\geq N_0$ be arbitrary and suppose that $\varepsilon\leq \mu_n$ for all $N_0\leq n\leq l+1$. Then by monotonicity of $\psi$ we have $\psi(\varepsilon)\leq \psi(\mu_n)$ and thus by (\ref{eqn:recineq:use1}) it follows that
\begin{equation*}
\mu_{n+1}\leq \mu_n-\alpha_n\left(\psi(\mu_n)-\frac{\psi(\mu_n)}{2}\right)=\mu_n-\alpha_n\frac{\psi(\mu_n)}{2}
\end{equation*}
and therefore
\begin{equation*}
\frac{1}{2}\sum_{n=N_0}^l\alpha_n\leq \sum_{n=N_0}^l \left(\frac{\mu_n-\mu_{n+1}}{\psi(\mu_n)}\right)\leq \sum_{n=N_0}^l\left(\int_{\mu_{n+1}}^{\mu_n} \frac{dt}{\psi(t)}\right)=\int_{\mu_{l+1}}^{\mu_{N_0}}\frac{dt}{\psi(t)}
\end{equation*}
where for the second inequality we observe that for $N_0\leq n\leq l$ we have $0<\mu_{n+1}<\mu_n$ and thus the function $1/\psi(t)$ is well-defined, positive valued and monotonically decreasing on $[\mu_{n+1},\mu_n]$, and hence integrable with
\begin{equation*}
\frac{\mu_n-\mu_{n+1}}{\psi(\mu_n)}\leq\int_{\mu_{n+1}}^{\mu_n}\frac{dt}{\psi(t)}
\end{equation*}
Finally, since $\varepsilon\leq \mu_{l+1}<\mu_{N_0}\leq c$ (recall that  $c$ is an upper bound on $\seq{\mu_n}$) we have
\begin{equation*}
\sum_{n=N_0}^l\alpha_n\leq 2\int_{\varepsilon}^c\frac{dt}{\psi(t)}
\end{equation*}
which is false for $l:=r(N_0,2\int_{\varepsilon}^c\frac{dt}{\psi(t)})$. Therefore our assumption that $\varepsilon\leq \mu_n$ for all $N_0\leq n\leq l+1$ leads to a contradiction for this value of $l$, or in other words, there exists some $n\leq l+1$ such that $\mu_n<\varepsilon$. We now claim that in fact $\mu_k\leq 2\varepsilon$ for all $k\geq n$. This is shown by induction, where the base case is obvious and for the induction step we deal with two cases. Firstly, if $\varepsilon\leq \mu_k\leq 2\varepsilon$ then since $k\geq N_0$ and $\psi(\varepsilon)\leq \psi(\mu_k)$ it follows from (\ref{eqn:recineq:use1}) that
\begin{equation*}
\mu_{k+1}\leq \mu_k-\alpha_k\left(\psi(\mu_k)-\frac{\psi(\varepsilon)}{2}\right)=\mu_k-\alpha_k\frac{\psi(\varepsilon)}{2}<\mu_k\leq 2\varepsilon
\end{equation*}
On the other hand, if $\mu_k<\varepsilon$ then from (\ref{eqn:recineq:use2}) we have
\begin{equation*}
\mu_{k+1}\leq \mu_k-\alpha_k\psi(\mu_k)+\varepsilon\leq \mu_k+\varepsilon<2\varepsilon
\end{equation*}
This proves the claim, and thus in particular it follows that $\mu_k\leq 2\varepsilon$ for all $k\geq l+1$ for $l$ defined as above. Writing out $l$ in full and substituting $\varepsilon\mapsto \frac{\varepsilon}{2}$ gives us the rate of convergence, as $\varepsilon$ was arbitrary throughout.
\end{proof}
\begin{remark}
\label{rem:uniform:bound}
The uniform bound $\mu_n\leq c$ used to formulate Lemma \ref{res:recineq:first} is not strictly necessary, but it does allow us to provide a simplified rate of convergence which is independent of $\seq{\mu_n}$. Without such a bound, our rate of convergence (now depending on $\seq{\mu_n}$) would be given by
\begin{equation*}
\Phi_{\psi,\seq{\mu_n},\alpha,r,N}(\varepsilon):=r\left(M_{\psi,\alpha}(\varepsilon),2\int_{\varepsilon/2}^{\mu_{M_{\psi,\alpha}(\varepsilon)}} \frac{dt}{\psi(t)} \right)+1
\end{equation*}
for $M_{\psi,\alpha}(\varepsilon):=N\left(\frac{1}{2}\min\left\{\psi\left(\frac{\varepsilon}{2}\right),\frac{\varepsilon}{\alpha}\right\}\right)$.
\end{remark}

\begin{remark}
A more common formulation of the convergence result contained in Lemma \ref{res:recineq:first} is to suppose that
\begin{equation}
\label{eqn:recineq:standard}
\mu_{n+1}\leq \mu_n-\alpha_n\psi(\mu_n)+\gamma_n
\end{equation}
for all $n\in\NN$, where now $\seq{\gamma_n}$ is some sequence of reals with $\gamma_n/\alpha_n\to 0$. But if $N$ is a rate of convergence for $\gamma_n/\alpha_n\to 0$ then for any $\delta>0$ we have 
\begin{equation*}
\mu_{n+1}\leq \mu_n-\alpha_n\psi(\mu_n)+\alpha_n\delta
\end{equation*}
for all $n\geq N(\delta)$, which is precisely the scheme (\ref{eqn:recineq:abs}). Therefore in this case we also have that $\mu_n\to 0$ with the rate of convergence given in Lemma \ref{res:recineq:first}.
\end{remark}

Before we move on, we state a useful generalisation of Lemma \ref{res:recineq:first} that we will use later, and which is based on the following well-known variant of (\ref{eqn:recineq:standard}):
\begin{equation*}
\mu_{n+1}\leq (1+\beta_n)\mu_n-\alpha_n\psi(\mu_n)+\gamma_n
\end{equation*}
where now $\seq{\beta_n}$ is a sequence of nonnegative reals with $\prod_{n=0}^\infty (1+\beta_n)<\infty$.
\begin{lem}
\label{res:recineq:second}
Let $\seq{\mu_n}$ be a sequence of nonnegative real numbers, $\seq{\alpha_n}$ a bounded sequence of positive numbers with $\sum_{n=0}^\infty \alpha_n=\infty$, $\seq{\beta_n}$ a sequence of nonnegative numbers with $\prod_{n=0}^\infty (1+\beta_n)<\infty$, and $\psi:[0,\infty)\to [0,\infty)$ be a nondecreasing function which is positive on $(0,\infty)$. Suppose that for all $\delta>0$ there exists some $m\in\NN$ such that 
\begin{equation}
\label{eqn:recineq:beta}
\mu_{n+1}\leq (1+\beta_n)\mu_n-\alpha_n(\psi(\mu_n)-\delta)
\end{equation}
for all $n\geq m$. Then $\mu_n\to 0$. Moreover, if $\alpha>0$, $d\geq 1$, $r:(0,\infty)\times (0,\infty)\to \NN$ and $N:(0,\infty)\to \NN$ are such that
\begin{itemize}

\item $\alpha_n\leq \alpha$ for all $n\in\NN$,

\item $\prod_{i=0}^n (1+\beta_i)\leq d$ for all $n\in\NN$,

\item $r$ is a rate of divergence for $\sum_{n=0}^\infty\alpha_n$,

\item for any $\delta>0$ the inequality (\ref{eqn:recineq:beta}) holds for $n\geq N(\delta)$,

\end{itemize}
then $\mu_n\to 0$ with the following rate of convergence:
\begin{equation*}
\Phi_{\psi,c,\alpha,d,r,N}(\varepsilon):=r\left(N\left(\frac{1}{2d}\min\left\{\psi\left(\frac{\varepsilon}{2d}\right),\frac{\varepsilon}{\alpha}\right\}\right),2d\int_{\varepsilon/2d}^c \frac{dt}{\psi(t)} \right)+1
\end{equation*}
where $c$ is any upper bound on $\seq{\mu_n}$.
\end{lem}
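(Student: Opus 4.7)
The plan is to reduce Lemma \ref{res:recineq:second} directly to Lemma \ref{res:recineq:first} by a rescaling that absorbs the factors $1+\beta_n$. I would define $P_n := \prod_{i=0}^{n-1}(1+\beta_i)$, so that $1 \leq P_n \leq d$, and set $\nu_n := \mu_n / P_n$. Then $\nu_n \leq \mu_n \leq c$ and $\mu_n \leq d\nu_n$, so it suffices to establish a rate of convergence for $\nu_n \to 0$ and convert it via the substitution $\varepsilon \mapsto \varepsilon/d$ at the end.

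The first key step is to derive a clean recursive inequality for $\nu_n$. Dividing the hypothesis $\mu_{n+1} \leq (1+\beta_n)\mu_n - \alpha_n(\psi(\mu_n)-\delta)$ by $P_{n+1} = (1+\beta_n)P_n$ yields
\[
\nu_{n+1} \leq \nu_n - \frac{\alpha_n\psi(\mu_n)}{P_{n+1}} + \frac{\alpha_n\delta}{P_{n+1}}.
\]
Applying $\psi(\mu_n) \geq \psi(\nu_n)$ (since $\mu_n \geq \nu_n$ and $\psi$ is nondecreasing), $P_{n+1} \leq d$ in the subtracted term, and $P_{n+1} \geq 1$ in the remainder, this collapses to
\[
\nu_{n+1} \leq \nu_n - \tilde{\alpha}_n\bigl(\psi(\nu_n) - \tilde{\delta}\bigr),
\]
where $\tilde{\alpha}_n := \alpha_n/d$ and $\tilde{\delta} := d\delta$. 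This is precisely the form required by Lemma \ref{res:recineq:first}, and it holds for all $n \geq N(\tilde{\delta}/d)$.

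Next, I would verify the quantitative data needed to apply Lemma \ref{res:recineq:first} to $\nu_n$: the bound $\tilde{\alpha} := \alpha/d$ on $\tilde{\alpha}_n$; the rate of divergence $\tilde{r}(N, x) := r(N, dx)$ for $\sum \tilde{\alpha}_n$ (since $\tilde{\alpha}_n = \alpha_n/d$); the function $\tilde{N}(\tilde{\delta}) := N(\tilde{\delta}/d)$; and the upper bound $c$ on $\nu_n$. Lemma \ref{res:recineq:first} then delivers both $\nu_n \to 0$ and a rate $\Phi_{\psi, c, \tilde{\alpha}, \tilde{r}, \tilde{N}}$. Substituting $\varepsilon' \mapsto \varepsilon/d$ and unfolding $\tilde{r}$ and $\tilde{N}$ produces precisely the stated expression, noting the identity $(\varepsilon/d)/(\alpha/d) = \varepsilon/\alpha$.

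The main obstacle is simply the bookkeeping in this final step: making sure that the constants $2d$, $1/(2d)$, and $\varepsilon/(2d)$ land in the correct positions in $\Phi_{\psi, c, \alpha, d, r, N}(\varepsilon)$. Once the rescaling inequality is in place, everything else is mechanical substitution, and both the qualitative convergence and the explicit rate follow at once from Lemma \ref{res:recineq:first}.
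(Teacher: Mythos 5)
Your proof is correct and follows essentially the same route as the paper: both define the rescaled sequence $\nu_n = \mu_n/\prod_{i=0}^{n-1}(1+\beta_i)$ and reduce to Lemma~\ref{res:recineq:first}, the only cosmetic difference being that the paper absorbs the factor $d$ into the function (replacing $\psi$ by $\phi=\psi/d$) while you absorb it into the step sizes and error term (replacing $\alpha_n,\delta$ by $\alpha_n/d,\,d\delta$). Both bookkeepings unwind to the identical stated rate after the final substitution $\varepsilon\mapsto\varepsilon/d$.
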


\begin{proof}
We use a standard trick to reduce (\ref{eqn:recineq:beta}) to (\ref{eqn:recineq:abs}), see e.g. Lemma 3.3. of \cite{alber-chidume-zegeye:06:nonexpansive}. Define $\lambda_0:=\mu_0$ and $\lambda_n:=\mu_n/\prod_{i=0}^{n-1}(1+\beta_i)$ for $n>0$. Then for any $\delta>0$, for all $n\geq N(\delta)$ we have
\begin{equation*}
\lambda_{n+1}\leq \lambda_n-\frac{\alpha_n \psi(\mu_n)}{\prod_{i=0}^{n}(1+\beta_i)}+\frac{\alpha_n\delta}{\prod_{i=0}^n(1+\beta_i)}\leq \lambda_n-\alpha_nd^{-1}\psi(\mu_n)+\alpha_n\delta
\end{equation*}
using that $1\leq \prod_{i=0}^n (1+\beta_i)\leq d$. Moreover, since $\mu_n=\lambda_n\prod_{i=0}^{n-1}(1+\beta_i)\geq \lambda_n$, by monotonicity of $\psi$ we have $\psi(\mu_n)\geq\psi(\lambda_n)$ and thus
\begin{equation}
\label{eqn:rec:subs}
\lambda_{n+1}\leq \lambda_n-\alpha_n(\phi(\lambda_n)-\delta)
\end{equation}
for $\phi(t)=\psi(t)/d$, which is clearly also nondecreasing and positive on $(0,\infty)$. Observing finally that $\lambda_n\leq \mu_n\leq c$ for all $n\in\NN$, we can apply Lemma \ref{res:recineq:first} to $\lambda_n$ with parameters $\phi$, $c$, $\alpha$, $r$ and $N$ to establish a rate of convergence for $\lambda_n\to 0$. Finally, noting that $\mu_n\leq d\lambda_n$, we modify this rate of convergence with the substitution $\varepsilon\mapsto \frac{\varepsilon}{d}$ to obtain the stated rate of convergence for $\mu_n\to 0$.
\end{proof}

\subsection{Reformulation in terms of traditional rates of convergence}

We now give a rough translation of our main quantitative results so that they are phrased in terms of direct rates of convergence in the sense traditionally encountered in the literature, where we seek some explicit bounding function $f:\NN\to (0,\infty)$ with $\lim_{n\to\infty} f(n)=0$ such that $\mu_n\leq f(n)$ for all $n\in\NN$. Being able to provide a closed expression of this kind typically requires additional assumptions, such as the existence of inverse functions, and so our ``proof theoretic" formulations above are preferred. Nevertheless, the translation we provide in this section facilitates a direct comparison with known convergence rates in the literature. 

We emphasise that aside from allowing us to compare our proof theoretic rates of convergence with existing rates in cases where those have been given, Lemma \ref{res:translate} below is not required anywhere else, and in particular is not used to establish any of our main results.
\begin{defn}
Let $N:(0,\infty)\to \NN$ be a nonincreasing function. A continuous bounding function $\tilde{N}:(0,\infty)\to [0,\infty)$ for $N$ is defined to be any function which is continuous, nonincreasing and satisfies $N(\varepsilon)\leq \tilde{N}(\varepsilon)$ for all $\varepsilon\in (0,\infty)$. Since $N$ can be viewed as a simple step function in any interval, it is clear that such an $\tilde{N}$ always exists.
\end{defn}
\begin{lem}
\label{res:translate}
Suppose that $\seq{\mu_n}$, $\seq{\alpha_n}$, $\seq{\beta_n}$, $\psi$, $\alpha$, $d$, $N$ and $c$ all satisfy the assumptions of Lemma \ref{res:recineq:second}. Define
\begin{equation*}
\Psi(x):=\int^x\frac{dt}{\psi(t)}
\end{equation*}
and assuming w.l.o.g. that $N$ is nonincreasing, define $\tilde{N}:(0,\infty)\to [0,\infty)$ to be a continuous bounding function for $N$. Now define $F:(0,\infty)\to \RR$ by
\begin{equation*}
F(\varepsilon):= 2d\cdot \Psi\left(\frac{\varepsilon}{2d}\right)-\alpha\cdot \tilde{N}\left(\frac{1}{2d}\min\left\{\psi\left(\frac{\varepsilon}{2d}\right),\frac{\varepsilon}{\alpha}\right\}\right)
\end{equation*}
which must be strictly increasing and continuous, and hence invertible on its range. Then $F(\varepsilon)\to -\infty$ as $\varepsilon\to 0$ and for $n\in\NN$ sufficiently large we have
\begin{equation*}
\mu_n\leq F^{-1}\left(2d\Psi(c)-\sum^{n-2}_{i=0}\alpha_i\right)
\end{equation*}
\end{lem}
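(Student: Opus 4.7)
The plan is to invert the rate of convergence from Lemma \ref{res:recineq:second} pointwise. I will choose the candidate upper bound $\varepsilon_n := F^{-1}(2d\Psi(c) - \sum_{i=0}^{n-2}\alpha_i)$ and verify that $\mu_n \le \varepsilon_n$ for all $n$ sufficiently large. First I would confirm the claimed invertibility of $F$. Setting $M^*(\varepsilon) := \frac{1}{2d}\min\{\psi(\varepsilon/(2d)), \varepsilon/\alpha\}$, the map $\varepsilon\mapsto 2d\Psi(\varepsilon/(2d))$ has derivative $1/\psi(\varepsilon/(2d)) > 0$, while $M^*$ is continuous and nondecreasing so $\tilde N \circ M^*$ is continuous and nonincreasing; hence $F$ is continuous and strictly increasing, and therefore invertible on its range. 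The divergence $F(\varepsilon) \to -\infty$ as $\varepsilon \to 0^+$ then follows from either $2d\Psi(\varepsilon/(2d)) \to -\infty$ (when $\int_0^c dt/\psi(t)$ diverges) or $\tilde N(M^*(\varepsilon)) \to \infty$ (in the remaining case).

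Next I would extract from the proof of Lemma \ref{res:recineq:second}---specifically from the reduction to Lemma \ref{res:recineq:first} applied to $\lambda_n := \mu_n/\prod_{i<n}(1+\beta_i)$---the following quantitative subclaim: for any $\varepsilon > 0$, setting $N_0 := N(M^*(\varepsilon))$, whenever $l \ge N_0$ satisfies $\sum_{i=N_0}^{l} \alpha_i > 2d(\Psi(c)-\Psi(\varepsilon/(2d)))$, one has $\mu_k \le \varepsilon$ for every $k \ge l+1$. This implication is precisely what the internal argument establishes before bundling $l$ into the rate of divergence $r$ in the definition of $\Phi$.

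Finally, I would apply this subclaim with $\varepsilon = \varepsilon_n$ and $l = n-1$ for large $n$. The defining identity $F(\varepsilon_n) = 2d\Psi(c) - \sum_{i=0}^{n-2}\alpha_i$ unpacks to
\begin{equation*}
\alpha \tilde N(M^*(\varepsilon_n)) + 2d(\Psi(c) - \Psi(\varepsilon_n/(2d))) = \sum_{i=0}^{n-2}\alpha_i,
\end{equation*}
and since the second summand on the left is nonnegative for $n$ large enough that $c \ge \varepsilon_n/(2d)$, we deduce $\alpha\tilde N(M^*(\varepsilon_n)) \le \sum_{i=0}^{n-2}\alpha_i \le (n-1)\alpha$, so $N_0 \le \tilde N(M^*(\varepsilon_n)) \le n-1$; this handles the condition $l \ge N_0$. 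The remaining sum condition becomes $\sum_{i=N_0}^{n-1}\alpha_i > 2d(\Psi(c) - \Psi(\varepsilon_n/(2d)))$, which using $\sum_{i=0}^{N_0-1}\alpha_i \le \alpha\tilde N(M^*(\varepsilon_n))$ together with the displayed identity reduces to $\alpha_{n-1} > 0$, automatic by hypothesis. The main obstacle is ensuring that ``sufficiently large $n$'' guarantees both that $2d\Psi(c) - \sum_{i=0}^{n-2}\alpha_i$ lies in the range of $F$ and that the auxiliary nonnegativity conditions hold; past this threshold the algebra closes cleanly.
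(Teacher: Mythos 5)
Your strategy matches the paper's: define $\varepsilon_n := F^{-1}(2d\Psi(c) - \sum_{i=0}^{n-2}\alpha_i)$, take the minimal rate of divergence (equivalently, extract the internal subclaim from Lemmas \ref{res:recineq:first}--\ref{res:recineq:second}), and verify the two side conditions by unpacking the identity $F(\varepsilon_n) = 2d\Psi(c) - \sum_{i=0}^{n-2}\alpha_i$. The algebra in your final paragraph is correct and is a mild reorganisation of the paper's chain of inequalities. The paper does not separately prove $N_0 \le n-1$, since that is absorbed into the requirement $r(N_0,\cdot)\ge N_0$; your explicit check is fine, just slightly redundant.

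However, there is a genuine gap in your argument that $F(\varepsilon)\to -\infty$ as $\varepsilon\to 0^+$. You present it as a self-evident dichotomy: either $\int_0^c dt/\psi(t)$ diverges, or ``in the remaining case'' $\tilde N(M^*(\varepsilon))\to\infty$. The second half is precisely the nontrivial part, and it is not automatic from the definitions of $\Psi$ and $\tilde N$ alone: nothing in the shape of those functions forces $\tilde N\circ M^*$ to blow up when $\Psi$ stays bounded. The paper proves it via the hypotheses of Lemma \ref{res:recineq:second}: if $\tilde N$ (and hence $N$) were bounded by some $k$, then the recursive inequality \eqref{eqn:recineq:beta} holds for \emph{every} $\delta>0$ once $n\ge k$, so letting $\delta\to 0$ it holds with $\delta=0$; passing to $\lambda_n = \mu_n/\prod_{i<n}(1+\beta_i)$ and summing the telescoped bound $\alpha_n d^{-1}\le \Psi(\lambda_n)-\Psi(\lambda_{n+1})$ together with $\sum\alpha_n=\infty$ forces $\Psi(\lambda_m)\to -\infty$, and since $\lambda_m\to 0$ this gives $\Psi(\varepsilon)\to -\infty$. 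In other words, you need the dynamics of the sequence $\seq{\mu_n}$, not just the static form of $F$, to close this case; your proposal skips that step.

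Two smaller points: the monotonicity of $\varepsilon\mapsto 2d\Psi(\varepsilon/(2d))$ should not be argued via differentiability, since $\psi$ is only assumed nondecreasing and may have jumps; strict monotonicity of $\Psi$ follows directly from $1/\psi>0$ on $(0,\infty)$, so state it that way. Also, when you say the ``nonnegativity conditions hold for $n$ large enough that $c\ge\varepsilon_n/(2d)$,'' it is worth noting this is exactly the paper's notion of ``sufficiently large'' (namely $2d\Psi(c)-\sum_{i=0}^{n-2}\alpha_i \in F(0,\infty)$), which can be made effective as $\sum_{i=0}^{n-2}\alpha_i \ge 2d\Psi(c)-F(1)$.
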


\begin{proof}
We first show that $F(\varepsilon)\to -\infty$ as $\varepsilon\to 0$. This is obviously true if $\tilde N(\varepsilon)\to +\infty$ as $\varepsilon\to 0$, so we now assume that this is not the case i.e. there is some $k\in\NN$ such that $\tilde N(\varepsilon)\leq k$ and thus $N(\varepsilon)\leq k$ for all $\varepsilon\in (0,\infty)$. Defining $\lambda_n$ as in the proof of Lemma \ref{res:recineq:second}, then by definition of $N$ and (\ref{eqn:rec:subs}) it follows that 
\begin{equation}
\label{eqn:translate}
\lambda_{n+1}\leq \lambda_n-\alpha_nd^{-1}\psi(\lambda_n)+\alpha_n\delta
\end{equation}
for $n\geq N(\delta)$ and any $\delta>0$. But since $k\geq N(\delta)$ it follows that (\ref{eqn:translate}) holds for all $n\geq k$ independent of $\delta>0$, and therefore we actually have
\begin{equation}
\label{eqn:translate0}
\lambda_{n+1}\leq \lambda_n-\alpha_nd^{-1}\psi(\lambda_n)
\end{equation} 
for all $n\geq k$. Analogously to the proof of Lemma \ref{res:recineq:first}, rearranging (\ref{eqn:translate0}) and summing up to some arbitrary $m>k$ we obtain
\begin{equation*}
d^{-1}\sum_{n=k}^{m-1}\alpha_n\leq \sum_{n=k}^{m-1}\frac{\lambda_n-\lambda_{n+1}}{\psi(\lambda_n)}\leq \sum_{n=k}^{m-1}\int_{\lambda_{n+1}}^{\lambda_n}\frac{dt}{\psi(t)}=\int^{\lambda_k}_{\lambda_{m}}\frac{dt}{\psi(t)}=\Psi(\lambda_k)-\Psi(\lambda_m)
\end{equation*}
Letting $m\to \infty$ it follows from $\sum_{n=k}^{\infty} \alpha_n=\infty$ that $\Psi(\lambda_m)\to -\infty$. Since $\lambda_m\to 0$ and $\Psi$ is monotonic we can infer that $\Psi(\varepsilon)\to -\infty$ and thus $F(\varepsilon)\to -\infty$ as $\varepsilon\to 0$.

Now, assume that $n\in\NN$ is sufficiently large so that 
\begin{equation}
\label{eqn:sufficiently:large}
2d\Psi(c)-\sum^{n-2}_{i=0}\alpha_i\in F(0,\infty)
\end{equation}
We can make ``sufficiently large'' precise here by noting that $(-\infty,F(1)]\subset F(0,\infty)$ and so (\ref{eqn:sufficiently:large}) holds for all $n\geq m$ where $m$ is such that $\sum^{m-2}_{i=0}\alpha_i\geq 2d\Psi(c)-F(1)$. Now define 
\begin{equation*}
\varepsilon_n:=F^{-1}\left(2d\Psi(c)-\sum^{n-2}_{i=0}\alpha_i\right)
\end{equation*}
so that our aim becomes establishing $\mu_n\leq \varepsilon_n$ for all $n\in\NN$. By Lemma \ref{res:recineq:second} we have that $\mu_m\leq\varepsilon_n$ for all $m\geq \Phi_{\psi,c,\alpha,d,r,N}(\varepsilon_n)$ for $\Phi_{\psi,c,\alpha,d,r,N}$ as defined in that lemma, where $r$ is any rate of divergence for $\sum_{i=0}^\infty\alpha_i$. So it suffices to show that $n\geq \Phi_{\psi,c,\alpha,d,r,N}(\varepsilon_n)$, or in other words,
\begin{equation}
\label{eqn:goal0}
n\geq r\left(N\left(\frac{1}{2d}\min\left\{\psi\left(\frac{\varepsilon_n}{2d}\right),\frac{\varepsilon_n}{\alpha}\right\}\right),2d\int_{\varepsilon_n/2d}^c \frac{dt}{\psi(t)} \right)+1
\end{equation}
for some rate of divergence $r$. Suppose therefore that $r$ is given by
\begin{equation*}
r(N,x):=\mbox{ least $k$ such that $\sum_{i=N}^k\alpha_i>x$}
\end{equation*}
which is well defined under the assumption that $\sum_{i=0}^\infty \alpha_i=\infty$. Then $m\geq r(N,x)$ is equivalent to the statement $\sum_{i=N}^m\alpha_i> x$ and so (\ref{eqn:goal0}) is equivalent to
\begin{equation*}
\sum_{i=N_0}^{n-1}\alpha_i> 2d\int_{\varepsilon_n/2d}^c\frac{dt}{\psi(t)}=2d\left(\Psi(c)-\Psi\left(\frac{\varepsilon_n}{2d}\right)\right)
\end{equation*}
for $N_0:=N\left(\frac{1}{2d}\min\left\{\psi\left(\frac{\varepsilon_n}{2d}\right),\frac{\varepsilon_n}{\alpha}\right\}\right)$. This can be reformulated as
\begin{equation}
\label{eqn:goal1}
2d\cdot \Psi\left(\frac{\varepsilon_n}{2d}\right)-\sum_{i=0}^{N_0-1}\alpha_i> 2d\Psi(c)-\sum_{i=0}^{n-1}\alpha_i
\end{equation}
All that remains is to establish (\ref{eqn:goal1}), and for this we observe that
\begin{equation*}
\begin{aligned}
2d\Psi(c)-\sum_{i=0}^{n-1}\alpha_i&<2d\Psi(c)-\sum_{i=0}^{n-2}\alpha_i\\
&=F(\varepsilon_n)\\
&=2d\cdot \Psi\left(\frac{\varepsilon_n}{2d}\right)-\alpha\cdot \tilde{N}\left(\frac{1}{2d}\min\left\{\psi\left(\frac{\varepsilon_n}{2d}\right),\frac{\varepsilon_n}{\alpha}\right\}\right)\\
&\leq 2d\cdot \Psi\left(\frac{\varepsilon_n}{2d}\right)-\alpha N_0\\
&\leq 2d\cdot \Psi\left(\frac{\varepsilon_n}{2d}\right)-\sum_{i=0}^{N_0-1}\alpha_i
\end{aligned}
\end{equation*}
\end{proof}

\subsection{Weakly contractive mappings: A simple case study}

We conclude this section by demonstrating that a quantitative convergence result for weakly contractive mappings already established in \cite{alber-guerredelabriere:97:weaklycontractive} falls out as a very simple case of our framework. In subsequent sections we will then consider more complex convergence results for asymptotically contractive mappings, and in those cases will provide new and general rates of convergence.
\begin{thm}[Cf. Theorem 3.1 of \cite{alber-guerredelabriere:97:weaklycontractive}]
\label{res:simple}
Let $E\subseteq X$ and suppose that $T:E\to X$ is a mapping which satisfies
\begin{equation*}
\norm{Tx-Ty}\leq \norm{x-y}-\psi(\norm{x-y})
\end{equation*}
for some nondecreasing function $\psi:[0,\infty)\to [0,\infty)$ which is positive on $(0,\infty)$. Let $q\in E$ be a fixpoint of $T$, and suppose that the sequence $\seq{x_n}$ in $X$ satisfies $x_{n+1}=Tx_n$ for all $n\in\NN$. Then $\norm{x_n-q}\to 0$ with rate of convergence
\begin{equation*}
\Phi(\varepsilon):=\Bigl\lceil 2\int_{\varepsilon/2}^{\norm{x_0-q}}\frac{dt}{\psi(t)}\Bigr\rceil+1
\end{equation*}
or alternatively,
\begin{equation*}
\norm{x_n-q}\leq 2\Psi^{-1}\left(\Psi(\norm{x_0-q})-\frac{n-1}{2}\right)
\end{equation*} 
for $\Psi(x):=\int^x dt/\psi(t)$. 
\end{thm}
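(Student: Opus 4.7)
The plan is to recognise this theorem as the simplest possible special case of Lemma \ref{res:recineq:first}, obtained by taking the Picard iteration (i.e.\ $\alpha_n=1$) together with the ordinary weakly contractive assumption. First I would set $\mu_n := \norm{x_n-q}$ and observe that, because $q$ is a fixpoint of $T$, the weakly contractive property gives
\[
\mu_{n+1} = \norm{Tx_n-Tq} \leq \norm{x_n-q}-\psi(\norm{x_n-q}) = \mu_n-\psi(\mu_n)
\]
for every $n\in\NN$. In particular, inequality (\ref{eqn:recineq:abs}) holds with $\alpha_n=1$ and arbitrary $\delta>0$ for every $n\geq 0$, so we may take $N(\delta)\equiv 0$ in the hypotheses of Lemma \ref{res:recineq:first}. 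Moreover, since $\psi\geq 0$ the sequence $\seq{\mu_n}$ is nonincreasing, so $c:=\norm{x_0-q}$ serves as a uniform upper bound.

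Next I would instantiate the remaining parameters: $\alpha = 1$ bounds $\seq{\alpha_n}$, and a valid rate of divergence for the constant sequence is $r(N,x):=\lceil x+N\rceil$ (as noted in the example following Definition \ref{def:rate:div}). Substituting these choices into the rate $\Phi_{\psi,c,\alpha,r,N}$ delivered by Lemma \ref{res:recineq:first}, the inner $N$-argument collapses to $0$ and one is left with
\[
\Phi(\varepsilon) = \Bigl\lceil 2\int_{\varepsilon/2}^{\norm{x_0-q}}\frac{dt}{\psi(t)}\Bigr\rceil+1,
\]
which is exactly the first form in the statement.

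To recover the alternative closed-form bound, I would simply reparametrise. Replacing $\varepsilon$ by $2\varepsilon$, the conclusion of Lemma \ref{res:recineq:first} tells us that $\mu_n\leq 2\varepsilon$ as soon as $n-1 \geq 2\int_\varepsilon^{\norm{x_0-q}} dt/\psi(t) = 2\bigl(\Psi(\norm{x_0-q})-\Psi(\varepsilon)\bigr)$. Rearranging using the fact that $\Psi$ is strictly increasing (since $\psi>0$ on $(0,\infty)$) and hence invertible on its range, this is equivalent to $\varepsilon \geq \Psi^{-1}\bigl(\Psi(\norm{x_0-q})-(n-1)/2\bigr)$. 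Choosing $\varepsilon$ to achieve equality then yields $\norm{x_n-q}\leq 2\Psi^{-1}\bigl(\Psi(\norm{x_0-q})-(n-1)/2\bigr)$, as required.

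I do not anticipate any real obstacles: essentially all the quantitative work has been carried out in Lemma \ref{res:recineq:first}, and this theorem is best viewed as a sanity check demonstrating that the general framework recovers the rate of Alber and Guerre-Delabriere from the trivial instantiation of all parameters.
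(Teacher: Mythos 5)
Your proposal is correct and, for the first rate, coincides exactly with the paper's proof: set $\mu_n:=\norm{x_n-q}$, observe that the weak contractivity at the fixpoint $q$ gives $\mu_{n+1}\leq\mu_n-\psi(\mu_n)$, and feed $\alpha_n=1$, $N(\delta)\equiv 0$, $r(N,x)=\lceil N+x\rceil$ and the bound $c=\norm{x_0-q}$ into Lemma~\ref{res:recineq:first}. The only difference is in how the alternative closed-form bound is obtained: the paper invokes the general translation result, Lemma~\ref{res:translate}, with $d=1$ and $F(\varepsilon)=2\Psi(\varepsilon/2)$, whereas you derive it directly by reparametrising ($\varepsilon\mapsto 2\varepsilon$) and inverting $\Psi$. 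In this trivial instantiation the two computations are identical, so your direct route is a legitimate and slightly more transparent alternative; what Lemma~\ref{res:translate} buys in general (handling nontrivial $N$, $\tilde N$, $d$, and making the ``sufficiently large $n$'' caveat precise) is not needed here, though strictly speaking both you and the paper suppress that caveat in stating the closed-form bound.
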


\begin{proof}
We observe that
\begin{equation*}
\norm{x_{n+1}-q}=\norm{Tx_n-Tq}\leq \norm{x_n-q}-\psi(\norm{x_n-q})
\end{equation*}
and therefore the sequence $\seq{\norm{x_n-q}}$ satisfies (\ref{eqn:recineq:abs}) for $\alpha_n=1$, $N(\delta)=0$ and $r(N,x)=\lceil N+x\rceil$. The first rate of convergence then follows by noting that $\norm{x_0-q}$ is an upper bound for $\seq{\norm{x_n-q}}$ and plugging this data into Lemma \ref{res:recineq:first}. The second rate follows from Lemma \ref{res:translate} (setting $\beta_n=1$ and $d=1$), where having $N(\delta)=0$ allows us to define $F(\varepsilon):=2\Psi(\varepsilon/2)$.
\end{proof}

It is instructive to compare the precise formulation of Theorem \ref{res:simple} above to the corresponding Theorem 3.1 of \cite{alber-guerredelabriere:97:weaklycontractive}, as the differences represent important features of our approach which will apply throughout later sections. Firstly, we assume the existence of a fixpoint, and that allows us to weaken certain assumptions on $T$ down to those which are essential for establishing convergence. For example, here $\psi$ is not required to be continuous, neither must it satisfy the asymptotic property $\lim_{t\to\infty}\psi(t)=\infty$ (or alternatively that $E$ be bounded) as these are only required to establish the \emph{existence} of a fixpoint. 

Similarly, we assume that the sequence $\seq{x_n}$ satisfies $x_{n+1}=Tx_n$, rather than demanding any additional properties of $T$ which would ensure that Picard iterates can be generated from any initial point $x_0$. So here we do not require that $T(E)\subseteq E$, and later we can omit stronger assumptions on the domain, such as convexity. The crucial point here is that for each abstract convergence theorems we provide, there will be a natural setting setting in which fixpoints $q$ and the relevant approximating sequences $\seq{x_n}$ do indeed exist, but we do not concern ourselves with those details in this paper.

Other properties can be inferred from our assumptions. For instance, the existence of a nonnegative sequence $\seq{\mu_n}$ satisfying $\mu_{n+1}\leq \mu_n-\psi(\mu_n)$ necessarily implies that $\psi(0)=0$, even though this condition is not explicitly stated: After all, if $\psi(0)>0$ then by monotonicity we also have $\psi(\mu_n)\geq \psi(0)>0$. But since $\mu_n\to 0$ we have $\mu_n<\psi(0)$ for sufficiently large $n$, and therefore $\mu_{n+1}\leq \mu_n-\psi(\mu_n)\leq \mu_n-\psi(0)<0$, contradicting $\mu_{n+1}\geq 0$.

Finally, Theorem \ref{res:simple} illustrates our approach of providing ``proof theoretic'' rates of convergence in the sense of Section \ref{sec:basic:convdiv}. These rates of convergence are typically simpler, and crucially we do not need to prove in addition that the bounding function $\mu_n\leq f(n)$ satisfies $\lim_{t\to 0} f(t)=0$ to establish $\mu_n\to 0$. However, our Lemma \ref{res:translate} nevertheless allows us to translate our rates of convergence into traditional ones, and for concrete applications where explicit rates of convergence appear in literature, we can offer a direct comparison. For example, the rates given in our Theorem \ref{res:simple} match up well with those stated in \cite{alber-guerredelabriere:97:weaklycontractive}:
\begin{equation*}
\norm{x_n-q}\leq \Psi^{-1}(\Psi(\norm{x_0-q})-(n-1))
\end{equation*}

This similarity (differing only by a few constants) for simple cases suggests that our abstract quantitative results, which are both formulated and proven in a different style to the concrete convergence theorems they generalise, nevertheless provide good rates of convergence.

\section{Case study 1: Convergence of Mann iteration for asymptotically contractive mappings}
\label{sec:cs1}

We now present our first general convergence result, where we combine our abstract notion of being asymptotically contractive with the quantitative lemmas of the previous section to also provide a rate of convergence. The proof is extremely simple, but as we demonstrate, several existing theorems from the literature can be regarded as special cases of our general result. 
\begin{thm}
\label{res:mann}
Let $\seq{A_n}$ be a sequence of mappings $A_n:E\to X$ and $\psi:[0,\infty)\to [0,\infty)$ be some nondecreasing function with $\psi(0)=0$. Suppose that $\seq{k_n}$ is some sequence of nonnegative reals and $\sigma:(0,\infty)\times (0,\infty)\to \NN$ a modulus such that $\seq{A_n}$ is quasi asymptotically weakly contractive with respect to $q\in X$ in the sense that
\begin{equation*}
\norm{x-q}\leq b\implies \norm{A_nx-q}\leq (1+k_n)\norm{x-q}-\psi(\norm{x-q})+\delta
\end{equation*}
for all $\delta,b>0$ and $n\geq\sigma(\delta,b)$. Suppose in addition that $\seq{x_n}$ is a sequence satisfying 
\begin{equation}
\label{eqn:mann}
x_{n+1}=(1-\alpha_n)x_n+\alpha_n A_nx_n
\end{equation}
where $\seq{\alpha_n}$ is some sequence in $(0,\alpha]$ such that $\sum_{n=0}^\infty \alpha_n=\infty$ with rate of divergence $r$ and $d>0$ is such that $\prod_{i=0}^n(1+\alpha_ik_i)\leq d$ for all $n\in\NN$. Then whenever there exists $c>0$ such that $\norm{x_n-q}\leq c$ for all $n\in\NN$, we have $\norm{x_n-q}\to 0$ with rate
\begin{equation*}
\Phi_{\psi,c,\alpha,d,r,\sigma}(\varepsilon):=r\left(\sigma\left(\frac{1}{2d}\min\left\{\psi\left(\frac{\varepsilon}{2d}\right),\frac{\varepsilon}{\alpha}\right\},c\right),2d\int_{\varepsilon/2d}^c \frac{dt}{\psi(t)} \right)+1
\end{equation*}
\end{thm}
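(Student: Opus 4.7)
The plan is to reduce the statement directly to Lemma \ref{res:recineq:second} applied to the sequence $\mu_n := \norm{x_n-q}$. The heart of the proof is therefore the derivation of a recursive inequality for $\mu_n$ matching the form (\ref{eqn:recineq:beta}), with the role of $\beta_n$ played by $\alpha_n k_n$ and the role of $N(\delta)$ played by $\sigma(\delta,c)$.

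First, I would expand $\mu_{n+1}$ using (\ref{eqn:mann}), factor out $x_n-q$ on both sides of the convex combination, and apply the triangle inequality together with the convexity of the norm to obtain
\begin{equation*}
\mu_{n+1}\leq (1-\alpha_n)\mu_n+\alpha_n\norm{A_nx_n-q}.
\end{equation*}
For arbitrary $\delta>0$, the uniform bound $\mu_n\leq c$ then permits me to invoke the quasi asymptotic weak contractivity hypothesis with $b:=c$: for every $n\geq\sigma(\delta,c)$ we have
\begin{equation*}
\norm{A_nx_n-q}\leq (1+k_n)\mu_n-\psi(\mu_n)+\delta.
\end{equation*}
Substituting and collecting terms yields
\begin{equation*}
\mu_{n+1}\leq (1+\alpha_nk_n)\mu_n-\alpha_n(\psi(\mu_n)-\delta)
\end{equation*}
for all $n\geq\sigma(\delta,c)$, which is exactly (\ref{eqn:recineq:beta}) with $\beta_n:=\alpha_nk_n$ and $N(\delta):=\sigma(\delta,c)$.

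With this reduction in hand, the rest is bookkeeping: $\seq{\alpha_n}$ is bounded by $\alpha$, $r$ is a rate of divergence for $\sum\alpha_n$, and the hypothesis $\prod_{i=0}^n(1+\alpha_ik_i)\leq d$ supplies the product bound required by Lemma \ref{res:recineq:second}. Applying that lemma with uniform bound $c$ on $\seq{\mu_n}$ and substituting $N(\delta)=\sigma(\delta,c)$ into the stated rate $\Phi_{\psi,c,\alpha,d,r,N}$ reproduces exactly the rate $\Phi_{\psi,c,\alpha,d,r,\sigma}$ claimed in the theorem. There is no real obstacle here; consistent with the authors' remark that the proof is extremely simple, the entire content of the result lies in the way the hypotheses have been abstracted (the modulus $\sigma$, the uniform bound $c$, the product bound $d$), so that the combination of the one-line contractivity computation with the quantitative recursive inequality lemma yields the rate mechanically.
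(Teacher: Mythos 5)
Your proposal is correct and mirrors the paper's own proof exactly: both derive the recursive inequality $\mu_{n+1}\leq(1+\alpha_nk_n)\mu_n-\alpha_n(\psi(\mu_n)-\delta)$ for $n\geq\sigma(\delta,c)$ from the Mann scheme and the quasi asymptotic weak contractivity hypothesis, and then invoke Lemma \ref{res:recineq:second} with $\beta_n:=\alpha_nk_n$ and $N(\delta):=\sigma(\delta,c)$. There is no difference in approach.
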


\begin{proof}
We simply observe that for any $\delta$ and all $n\geq\sigma(\delta,c)$ we have
\begin{equation*}
\begin{aligned}
\norm{x_{n+1}-q}&\leq (1-\alpha_n)\norm{x_n-q}+\alpha_n\norm{A_nx_n-q}\\
&\leq (1-\alpha_n)\norm{x_n-q}+\alpha_n((1+k_n)\norm{x_n-q}-\psi(\norm{x_n-q})+\delta)\\
&\leq (1+\alpha_nk_n)\norm{x_n-q}-\alpha_n(\psi(\norm{x_n-q})-\delta)
\end{aligned}
\end{equation*}
and therefore Lemma \ref{res:recineq:second} applies for $\mu_n:=\norm{x_n-q}$, $\beta_n:=\alpha_nk_n$ and $N(\delta):=\sigma(\delta,c)$, which yields the given rate of convergence.
\end{proof}

\begin{remark}
\label{rem:cmax}
In the case where we have a modulus $\sigma(\delta)$ which is independent of the bound $\norm{x-q}\leq b$, we no longer require a uniform bound $\norm{x_n-q}\leq c$ in order to \emph{prove} convergence of the sequence in the first place. Rather for the rate of convergence given in Theorem \ref{res:mann} it is enough to choose any $c>0$ satisfying $\max\{\norm{x_n-q}\; : \; n\in\NN\}\leq c$. Moreover, following Remark \ref{rem:uniform:bound} such a bound for the whole sequence can even by computed directly by  defining $c:=\max_{n\leq k}\{\norm{x_n-q}, 1\}$ for
\begin{equation*}
k:=r\left(M_{\psi,\alpha},2d\int_{1/2d}^{\norm{x_{M_{\psi,\alpha}}-q}} \frac{dt}{\psi(t)} \right)+1
\end{equation*}
with $M_{\psi,\alpha,d}:=\sigma\left(\frac{1}{2d}\min\left\{\psi\left(\frac{1}{2d}\right),\frac{1}{\alpha}\right\},c\right)$, where here $k$ is obtained by setting $\varepsilon=1$ in the non-independent rate of convergence for $\norm{x_n-q}\to 0$. We would then have $\norm{x_n-q}\leq 1$ for all $n\geq k$ and thus $\norm{x_n-q}\leq c$ for all $n\in\NN$.
\end{remark}

\subsection{Approximate weakly contractive mappings}
\label{sec:cs1:alber}

As our first concrete application of the main theorem in this section, we give a computational version of Section 3 of \cite{alber-guerredelabriere:97:weaklycontractive}, which considers sequence $\seq{A_n}$ of operators which are weakly contractive in the limit.
\begin{cor}
[cf. Theorem 3.4 of \cite{alber-guerredelabriere:97:weaklycontractive}]
\label{res:cs1:alber:guerre}
Let $\seq{A_n}$ be a sequence of mappings $A_n:E\to X$ and $A$ a $\psi$-weakly contractive mapping, and suppose that $\psi_n,g:[0,\infty)\to [0,\infty)$ are functions positive on $(0,\infty)$, and $\seq{k_n}$, $\seq{\mu_n}$, $\seq{h_n}$, $\seq{\delta_n}$ and $\seq{\nu_n}$ are sequences of nonnegative reals such that for $x,y\in E$ and $t\in [0,\infty)$:
\begin{equation*}
\begin{aligned}
\norm{A_nx-A_ny}&\leq (1+k_n)\norm{x-y}-\psi_n(\norm{x-y})+\mu_n\\
\norm{A_nx-Ax}&\leq h_ng(\norm{x})+\delta_n\\
|\psi_n(t)-\psi(t)|&\leq \nu_n
\end{aligned}
\end{equation*}
and $\mu_n,h_n,\delta_n,\nu_n\to 0$ with rates of convergence $f_1,f_2,f_3$ and $f_4$ respectively. Suppose in addition that $\seq{x_n}$ satisfies $x_{n+1}=A_nx_n$. Let $q$ be a fixpoint of $A$ and suppose that $\sum_{n=0}^\infty k_n\leq d$ for some $d>0$. Then $\norm{x_n-q}\to 0$ with rate of convergence
\begin{equation*}
\begin{aligned}
&\Phi_{\psi,c,c_1,d,f_1,f_2,f_3,f_4}(\varepsilon):=\\
&\sigma_{c_1,f_1,f_2,f_3,f_4}\left(\frac{1}{2e^d}\min\left\{\psi\left(\frac{\varepsilon}{2e^d}\right),\varepsilon\right\}\right)+\Bigl\lceil 2e^d\int_{\varepsilon/2e^d}^c\frac{dt}{\psi(t)}\Bigr\rceil +1
\end{aligned}
\end{equation*}
for 
\begin{equation*}
\sigma_{c_1,f_1,f_2,f_3,f_4}(\delta):=\max\left\{f_1\left(\frac{\delta}{4}\right),f_2\left(\frac{\delta}{4c_1}\right),f_3\left(\frac{\delta}{4}\right),f_4\left(\frac{\delta}{4}\right)\right\}
\end{equation*}
and where $c,c_1>0$ are any reals satisfying $\max\{\norm{x_n-q} \; : \; n\in\NN\}\leq c$ and $g(\norm{q})\leq c_1$.
\end{cor}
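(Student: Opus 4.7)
The plan is to derive this corollary as a direct instance of Theorem \ref{res:mann}. The iteration $x_{n+1}=A_nx_n$ coincides with the Mann scheme (\ref{eqn:mann}) for the choice $\alpha_n=1$, so we may take $\alpha=1$ and the trivial rate of divergence $r(N,x):=\lceil N+x\rceil$ for $\sum\alpha_n$. From $\sum_{n=0}^\infty k_n\leq d$ we obtain $\prod_{i=0}^n(1+\alpha_ik_i)=\prod_{i=0}^n(1+k_i)\leq\exp(\sum k_i)\leq e^d$, supplying the product bound required by Theorem \ref{res:mann} (with the parameter $d$ there replaced by $e^d$).

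The key step is to verify that $\seq{A_n}$ is quasi asymptotically weakly contractive relative to $q$ with a modulus $\sigma$ that happens to be independent of the bound $b$. Using $Aq=q$ and the triangle inequality one writes
\begin{equation*}
\norm{A_nx-q}\leq \norm{A_nx-A_nq}+\norm{A_nq-Aq},
\end{equation*}
and then applies the three hypotheses on $\seq{A_n}$: the first yields $\norm{A_nx-A_nq}\leq(1+k_n)\norm{x-q}-\psi_n(\norm{x-q})+\mu_n$; the second, combined with $g(\norm{q})\leq c_1$, yields $\norm{A_nq-Aq}\leq h_nc_1+\delta_n$; and the third lets us replace $-\psi_n(\norm{x-q})$ by $-\psi(\norm{x-q})+\nu_n$. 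Adding these up produces
\begin{equation*}
\norm{A_nx-q}\leq(1+k_n)\norm{x-q}-\psi(\norm{x-q})+\bigl(\mu_n+h_nc_1+\delta_n+\nu_n\bigr),
\end{equation*}
where the bound $b$ is not used. Partitioning $\delta$ into four quarters and applying the rates $f_1,f_2,f_3,f_4$ shows that $\mu_n+h_nc_1+\delta_n+\nu_n\leq\delta$ whenever $n\geq\sigma_{c_1,f_1,f_2,f_3,f_4}(\delta)$ for the modulus displayed in the statement, confirming quasi asymptotic weak contractivity.

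It then suffices to feed these parameters into Theorem \ref{res:mann}. The uniform bound $c$ on $\seq{\norm{x_n-q}}$ is supplied by hypothesis (and, as in Remark \ref{rem:cmax}, could even be computed a priori since our $\sigma$ does not depend on $b$), so we may legitimately take $b=c$ in the assumption of Theorem \ref{res:mann}. Substituting $\alpha=1$, $d\mapsto e^d$, the chosen $r$, and the modulus $\sigma$ into the formula for $\Phi_{\psi,c,\alpha,d,r,\sigma}$ yields precisely the stated rate, modulo the cosmetic identity $\lceil m+x\rceil=m+\lceil x\rceil$ valid because $\sigma$ already takes values in $\NN$. There is no real obstacle beyond careful bookkeeping of constants.
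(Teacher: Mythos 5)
Your proposal is correct and takes essentially the same approach as the paper: verify quasi asymptotic weak contractivity relative to $q$ via the triangle inequality $\norm{A_nx-q}\leq\norm{A_nx-A_nq}+\norm{A_nq-Aq}$ and the three hypotheses, observe the modulus is independent of $b$, bound $\prod(1+k_i)\leq e^d$, and instantiate Theorem \ref{res:mann} with $\alpha_n=1$ and $r(N,x)=\lceil N+x\rceil$. The only cosmetic point is that the paper invokes Remark \ref{rem:cmax} to justify that a uniform bound $c$ suffices rather than one required for the convergence proof, which you also note.
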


\begin{proof}
If $q$ is a fixpoint of $A$ then we have
\begin{equation*}
\begin{aligned}
\norm{A_nx-q}&\leq \norm{A_nx-A_nq}+\norm{A_nq-Aq}\\
&\leq (1+k_n)\norm{x-q}-\psi_n(\norm{x-q})+\mu_n+h_ng(\norm{q})+\delta_n\\
&\leq (1+k_n)\norm{x-q}-\psi(\norm{x-q})+(\mu_n+h_ng(\norm{q})+\delta_n+\nu_n)
\end{aligned}
\end{equation*}
and therefore the $\seq{A_n}$ are quasi asymptotically weakly contractive in the sense of Theorem \ref{res:mann} with modulus $\sigma_{c_1,f_1,f_2,f_3,f_4}$ (which is also uniform in the bound $b$). The Picard sequence $\seq{x_n}$ is a special case of the scheme (\ref{eqn:mann:iteration}) with $\alpha_n=1$ for all $n\in\NN$, and using the inequality $1+x\leq e^x$ we have $\prod_{n=0}^n(1+k_i)\leq e^d$ for any $n\in\NN$. Therefore we can apply Theorem \ref{res:mann} and Remark \ref{rem:cmax} directly with $\alpha=1$ and $r(N,x):=\lceil x+N\rceil$ to obtain the given rate of convergence.
\end{proof}

\subsection{Totally asymptotically weakly contractive mappings}
\label{sec:cs1:chidume}

We now give a quantitative convergence proof relating to so-called totally asymptotically weakly contractive mappings \cite{alber-chidume-zegeye:06:nonexpansive}, a class of mapping which we have already alluded to in Section \ref{sec:basic:asymp}.
\begin{cor}
[cf. Theorem 4.1 of \cite{alber-chidume-zegeye:06:nonexpansive}]
\label{res:cs1:chidume}
Let $T:E\to X$ be a totally asymptotically weakly contractive mapping in the sense that there exist nondecreasing functions $\phi,\psi:[0,\infty)\to [0,\infty)$ with $\phi(0)=\psi(0)=0$ along with sequences $\seq{\nu_n},\seq{l_n}$ of nonnegative reals such that
\begin{equation*}
\norm{T^nx-T^ny}\leq \norm{x-y}+\nu_n\phi(\norm{x-y})-\psi(\norm{x-y})+l_n
\end{equation*}
and $\nu_n,l_n\to 0$ with rates $f_1$ and $f_2$ respectively. Suppose in addition that $\seq{x_n}$ is a sequence satisfying 
\begin{equation*}
x_{n+1}=(1-\alpha_n)x_n+\alpha_nT^nx_n
\end{equation*}
where $\seq{\alpha_n}$ is some sequence in $(0,\alpha]$ such that $\sum_{n=0}^\infty \alpha_n=\infty$ with rate of divergence $r$. Let $q$ be a fixpoint of $T$. Then whenever there exists $c>0$ such that $\norm{x_n-q}\leq c$ for all $n\in\NN$, we have $\norm{x_n-q}\to 0$ with rate
\begin{equation*}
\Phi_{\psi,\phi,c,\alpha,r,f_1,f_2}:=r\left(\sigma_{f_1,f_2,\phi}\left(\frac{1}{2}\min\left\{\psi\left(\frac{\varepsilon}{2}\right),\frac{\varepsilon}{\alpha}\right\},c\right),2\int_{\varepsilon/2}^c \frac{dt}{\psi(t)} \right)+1
\end{equation*}
for
\begin{equation*}
\sigma_{f_1,f_2,\phi}(\delta,b):=\max\left\{f_1\left(\frac{\delta}{2\phi(b)}\right),f_2\left(\frac{\delta}{2}\right)\right\}
\end{equation*}
\end{cor}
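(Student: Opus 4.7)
The plan is to derive this corollary as an essentially immediate application of Theorem \ref{res:mann}, by rewriting the total asymptotic weak contractivity property in the quasi-contractive form required there. First, I would set $y=q$ in the defining inequality for $T$ and use $T^n q = q$ to obtain
\begin{equation*}
\norm{T^nx - q} \leq \norm{x-q} - \psi(\norm{x-q}) + \nu_n\phi(\norm{x-q}) + l_n.
\end{equation*}
Under the assumption $\norm{x-q}\leq b$, monotonicity of $\phi$ gives $\phi(\norm{x-q})\leq \phi(b)$, so the error term is bounded by $\nu_n\phi(b)+l_n$.

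Next, I would show that this error is at most $\delta$ once $n\geq \sigma_{f_1,f_2,\phi}(\delta,b)$. Splitting $\delta$ evenly, it suffices to require $\nu_n\phi(b)\leq \delta/2$ and $l_n\leq \delta/2$, which hold respectively for $n\geq f_1(\delta/(2\phi(b)))$ and $n\geq f_2(\delta/2)$. Taking the maximum yields precisely the claimed $\sigma_{f_1,f_2,\phi}(\delta,b)$. This exhibits $\seq{T^n}$ as quasi asymptotically weakly contractive with respect to $q$ in the sense of Theorem \ref{res:mann}, with $k_n=0$ (so the product bound $\prod_{i=0}^n(1+\alpha_ik_i)\leq d$ is satisfied with $d=1$).

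Finally, I would invoke Theorem \ref{res:mann} with the parameters $k_n=0$, $d=1$, the uniform bound $c$ on $\norm{x_n-q}$, the divergence rate $r$ for $\sum\alpha_n$, and the modulus $\sigma_{f_1,f_2,\phi}$ just constructed. Substituting these values directly into the rate $\Phi_{\psi,c,\alpha,d,r,\sigma}$ from Theorem \ref{res:mann} produces the stated rate $\Phi_{\psi,\phi,c,\alpha,r,f_1,f_2}$, since setting $d=1$ eliminates the factors of $2d$ in the display.

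There is essentially no obstacle here beyond routine bookkeeping; the corollary is a clean instantiation of the general theorem. The only subtle point to keep in mind is that the modulus $\sigma_{f_1,f_2,\phi}(\delta,b)$ implicitly assumes $\phi(b)>0$, which is harmless since in the application we only ever evaluate at $b=c$, and if $\phi(c)=0$ then by monotonicity $\phi$ vanishes on $[0,c]$ and the $\nu_n$ term drops out of the inequality entirely, so one may simply take $\sigma(\delta,b):=f_2(\delta/2)$ in that degenerate case.
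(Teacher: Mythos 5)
Your proposal is correct and matches the paper's proof: both set $y=q$, absorb the error term via $\phi(\|x-q\|)\leq\phi(b)$, split $\delta$ evenly between the two rates $f_1,f_2$ to obtain $\sigma_{f_1,f_2,\phi}$, and then invoke Theorem~\ref{res:mann} with $k_n=0$ and $d=1$. Your remark about the degenerate case $\phi(b)=0$ is a harmless extra observation not made explicitly in the paper.
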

 
\begin{proof}
If $q$ is a fixpoint of $T$ then whenever $\norm{x-q}\leq b$ we have
\begin{equation*}
\begin{aligned}
\norm{T^nx-q}&\leq \norm{x-q}+\nu_n\phi(\norm{x-q})-\psi(\norm{x-q})+l_n\\
&\leq \norm{x-q}-\psi(\norm{x-q})+(\nu_n\phi(b)+l_n)
\end{aligned}
\end{equation*}
and therefore the sequence $\seq{T^n}$ is quasi asymptotically weakly contractive in the sense of Theorem \ref{res:mann} (for $k_n=0$) with modulus $\sigma_{f_1,f_2,\phi}$. Therefore Theorem \ref{res:mann} applies directly with $d:=1$ and results in the given rate of convergence.
\end{proof} 
  
\begin{remark}
For the special case of totally asymptotically weakly contractive mappings where $\phi$ is a linear function i.e. $\phi(t)=at$ and $l_n=0$, and subject to the condition $\sum_{n=0}^\infty\alpha_n\nu_n<\infty$, the rate of convergence we obtain is particularly simple. Here the main recursive inequality in the proof of Theorem \ref{res:mann} reduces to
\begin{equation*}
\norm{x_{n+1}-q}\leq (1+\alpha_n\nu_n)\norm{x_n-q}-\alpha_n\psi(\norm{x_n-q})
\end{equation*}
and so a rate of convergence for $\norm{x_n-q}\to 0$ is given by
\begin{equation*}
\Phi_{\psi,c,\alpha,d,r}(\varepsilon):=r\left(0,2e^d\int_{\varepsilon/2e^d}^c\frac{dt}{\psi(t)}\right)+1
\end{equation*}
where $\sum_{i=0}^n \alpha_ n\nu_n\leq d$ for all $n\in\NN$. Our connection with traditional rates of convergence in Lemma \ref{res:translate} gives, in this case, the alternative rate of convergence
\begin{equation*}
\norm{x_n-q}\leq 2e^d\Psi^{-1}\left(\Psi(\norm{x_0-q}-\frac{1}{2e^d}\sum_{i=0}^{n-2} \alpha_i\right)
\end{equation*}
which is broadly similar to the explicit rate of convergence provided for this case in \cite[p. 10--11]{alber-chidume-zegeye:06:nonexpansive}. Further generalisations in this direction are possible, for cases where $\phi(t)$ is linear or eventually linear for sufficiently large $t$, and rates of convergence can be provided for other results in Section 4 of \cite{alber-chidume-zegeye:06:nonexpansive}.
\end{remark}

\section{Case study 2: Asymptotically $d$-weakly contractive mappings in spaces with a uniformly continuous duality selection map}
\label{sec:cs2}

We now move to the setting of uniformly smooth Banach spaces, and turn our attention to a variant of weak contractivity which involves the dual space. These are particularly interesting for us, as the associated convergence results often rely on geometric properties of the underlying space, such a uniform smoothness, allowing us to produce rates of convergence in terms of the corresponding moduli. 

We start with some standard facts. Let $X^\ast$ be the dual of the space $X$, and $J:X\to 2^{X^\ast}$ the normalized duality mapping defined by
\begin{equation*}
Jx:=\{j\in X^\ast\; : \; \pair{x,j}=\norm{x}^2=\norm{j}^2\}
\end{equation*}
where $\pair{\; ,\; }$ denotes the duality pairing. We will make use of the following well known inequality: For $x,y\in X$ and $j\in J(x+y)$ we have
\begin{equation}
\label{eqn:geom}
\norm{x+y}^2\leq \norm{x}^2+2\pair{y,j}.
\end{equation}
\begin{defn}[\cite{alber-guerredelabriere:01:projection}]
\label{defn:dweakly}
A mapping $T:E\to X$ is called $d$-weakly contractive relative to some continuous and strictly increasing function $\psi:[0,\infty)\to [0,\infty)$ positive on $(0,\infty)$ and satisfying $\psi(0)=0$ and $\lim_{t\to\infty}\psi(t)=\infty$ if for all $x,y\in E$ there exists some $j\in J(x-y)$ such that
\begin{equation*}
|\pair{Tx-Ty,j}|\leq \norm{x-y}^2-\psi(\norm{x-y})
\end{equation*}
\end{defn}
In order to establish convergence to fixpoints for mappings of this kind, we will appeal to an alternative characterisation of a uniformly smooth space as being one equipped with a norm-to-norm uniformly continuous duality selection map:
\begin{defn}[\cite{kohlenbach-leustean:12:modulus}]
\label{defn:selectionmap}
A space with a uniformly continuous duality selection map $(X,J)$ is defined to be any Banach space $X$ equipped with a mapping $J:X\to X^\ast$ which satisfies
\begin{enumerate}

\item $\pair{x,Jx}=\norm{x}^2=\norm{Jx}^2$ for all $x\in X$,

\item $J$ is norm-to-norm uniformly continuous on bounded subsets of $X$.

\end{enumerate}
Furthermore, a modulus of uniform continuous for $J$ is defined to be any function $\omega:(0,\infty)\times (0,\infty)\to (0,\infty)$ such that for all $x,y\in X$ with $\norm{x},\norm{y}\leq d$:
\begin{equation*}
\norm{x-y}_X\leq \omega(d,\varepsilon)\implies \norm{Jx-Jy}_{X^\ast}\leq\varepsilon
\end{equation*}

\end{defn}
If $X$ is smooth then the normalised duality mapping is single-valued, and so coincides with a selection map in the sense of Definition \ref{defn:selectionmap}. Moreover, if $X$ is \emph{uniformly} smooth, then $J$ is uniformly continuous on bounded subsets, and therefore the uniformly smooth spaces form a natural class of spaces which always possess a uniformly continuous duality selection map. That the converse also holds, namely that any Banach space possessing a uniformly continuous duality selection map is uniformly smooth, is proven in \cite[Appendix A]{koernlein:15:halpern}. 
\begin{thm}
\label{res:dweakly}
Suppose that $(X,J)$ is a space equipped with a uniformly continuous duality selection map, with modulus of continuity $\omega$. Let $\seq{A_n}$ be a sequence of mappings $A_n:E\to X$ and $\psi:[0,\infty)\to [0,\infty)$ be some nondecreasing function with $\psi(0)=0$. Suppose that $\seq{A_n}$ is quasi asymptotically $d$-weakly contractive w.r.t. some $q\in X$ and with modulus $\sigma:(0,\infty)\times (0,\infty)\to \NN$, in the sense that for any $\delta,b>0$ and $x\in E$ we have
\begin{equation*}
\norm{x-q}\leq b\implies |\pair{A_nx-q,J(x-q)}|\leq\norm{x-q}^2-\psi(\norm{x-q})+\delta
\end{equation*}
for all $n\geq\sigma(\delta,b)$. Suppose in addition that $\seq{x_n}$ is a sequence satisfying (\ref{eqn:mann}) where $\seq{\alpha_n}$ is some sequence in $(0,\alpha]$ such that $\alpha_n\to 0$ with rate of convergence $f$ and $\sum_{n=0}^\infty \alpha_n=\infty$ with rate of divergence $r$. Then whenever $c_1,c_2>0$ are such that $\norm{x_n-q}\leq c_1$ and $\norm{A_nx_n-x_n}\leq c_2$ for all $n\in\NN$, we have $\norm{x_n-q}\to 0$ with rate
\begin{equation*}
\Phi_{\omega,\psi,c_1,c_2,\alpha,f,r,\sigma}(\varepsilon):=
r\left(N_{\omega,c_1,c_2,f,\sigma}\left(\frac{1}{2}\min\left\{2\psi\left(\frac{\varepsilon}{\sqrt{2}}\right),\frac{\varepsilon^2}{\alpha}\right\}\right),2\int_{\varepsilon^2/2}^{c_1} \frac{dt}{2\psi(\sqrt{t})} \right)
\end{equation*}
where
\begin{equation*}
N_{\omega,c_1,c_2,f,\sigma}(\delta):=\max\left\{\sigma\left(\frac{\delta}{4},c_1\right),f\left(\frac{1}{c_2}\cdot \omega\left(c_1,\frac{\delta}{4c_2}\right)\right)   \right\}
\end{equation*}

\end{thm}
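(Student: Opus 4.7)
The plan is to reduce to Lemma \ref{res:recineq:first} applied to the squared-distance sequence $\mu_n := \norm{x_n-q}^2$. The target function for the recursive inequality will be $\phi(t) := 2\psi(\sqrt{t})$, which is nondecreasing and positive on $(0,\infty)$ because $\psi$ is; an upper bound on $\seq{\mu_n}$ is $c_1^2$ (or whatever $c_1$ is intended to bound in the stated rate). After obtaining a rate for $\mu_n \to 0$, the substitution $\varepsilon \mapsto \varepsilon^2$ will give the rate for $\norm{x_n-q}\to 0$ in the theorem.

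First I would set up the core recursive inequality. Writing $x_{n+1}-q = (x_n-q) + \alpha_n(A_nx_n - x_n)$ and applying the geometric inequality \eqref{eqn:geom} with this decomposition yields
\begin{equation*}
\norm{x_{n+1}-q}^2 \leq \norm{x_n-q}^2 + 2\alpha_n \pair{A_nx_n - x_n, J(x_{n+1}-q)}.
\end{equation*}
The inner product involves $J(x_{n+1}-q)$ rather than $J(x_n-q)$, so the next step is to split
\begin{equation*}
\pair{A_nx_n - x_n, J(x_{n+1}-q)} = \pair{A_nx_n - x_n, J(x_n - q)} + \pair{A_nx_n - x_n, J(x_{n+1}-q) - J(x_n-q)}.
\end{equation*}
The first summand is the one to which the quasi $d$-weak contractivity assumption applies: rewriting $\pair{A_nx_n - x_n, J(x_n-q)} = \pair{A_nx_n - q, J(x_n-q)} - \norm{x_n-q}^2$, the hypothesis gives, for any $\delta_0>0$ and all $n\geq \sigma(\delta_0, c_1)$,
\begin{equation*}
\pair{A_nx_n - x_n, J(x_n - q)} \leq -\psi(\norm{x_n-q}) + \delta_0.
\end{equation*}

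The main obstacle, and the step that forces the appearance of the modulus $\omega$ and the rate $f$, is the second summand. Bounding it by $c_2 \cdot \norm{J(x_{n+1}-q) - J(x_n-q)}_{X^\ast}$, I would use that $\norm{x_{n+1}-x_n} = \alpha_n\norm{A_nx_n - x_n} \leq \alpha_n c_2$, so uniform continuity of $J$ on the $c_1$-ball gives $\norm{J(x_{n+1}-q) - J(x_n-q)} \leq \varepsilon_0$ whenever $\alpha_n c_2 \leq \omega(c_1, \varepsilon_0)$, which by definition of $f$ holds for all $n \geq f(\omega(c_1, \varepsilon_0)/c_2)$. Given a target slack $\delta > 0$ for the recursive inequality on $\mu_n$, I would choose $\delta_0 := \delta/4$ and $\varepsilon_0 := \delta/(4c_2)$; then for all $n \geq N_{\omega,c_1,c_2,f,\sigma}(\delta)$ both controls hold simultaneously, and combining everything yields
\begin{equation*}
\mu_{n+1} \leq \mu_n - 2\alpha_n \psi(\sqrt{\mu_n}) + \alpha_n \delta = \mu_n - \alpha_n(\phi(\mu_n) - \delta).
\end{equation*}

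With this in hand the conclusion is a direct invocation of Lemma \ref{res:recineq:first} applied to $\seq{\mu_n}$, with function $\phi$, bound $c_1^2$, step-size bound $\alpha$, divergence rate $r$, and $N(\delta) := N_{\omega,c_1,c_2,f,\sigma}(\delta)$. The resulting rate for $\mu_n\to 0$ has the shape $r(N(\tfrac{1}{2}\min\{\phi(\eta/2),\eta/\alpha\}), 2\int_{\eta/2}^{c_1^2} dt/\phi(t))+1$ in the argument $\eta$, and substituting $\eta = \varepsilon^2$ (so that $\mu_n \leq \varepsilon^2$ implies $\norm{x_n-q} \leq \varepsilon$) together with $\phi(\varepsilon^2/2) = 2\psi(\varepsilon/\sqrt{2})$ produces exactly the rate $\Phi_{\omega,\psi,c_1,c_2,\alpha,f,r,\sigma}(\varepsilon)$ displayed in the theorem statement.
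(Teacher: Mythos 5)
Your proposal is correct and follows essentially the same route as the paper: apply the geometric inequality \eqref{eqn:geom} to get $\mu_{n+1}\leq\mu_n+2\alpha_n\pair{A_nx_n-x_n,J(x_{n+1}-q)}$, split the pairing into a $J(x_n-q)$ term (handled by the quasi $d$-weak contractivity hypothesis with slack $\delta/4$) and a difference term $J(x_{n+1}-q)-J(x_n-q)$ (handled by uniform continuity of $J$, $\norm{x_{n+1}-x_n}\leq\alpha_nc_2$, and the rate $f$ for $\alpha_n\to 0$, again with slack $\delta/4$), then feed the resulting recursive inequality for $\mu_n=\norm{x_n-q}^2$ with $\bar\psi(t)=2\psi(\sqrt t)$ into Lemma~\ref{res:recineq:first} and substitute $\varepsilon\mapsto\varepsilon^2$. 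Your observation that the correct upper bound on $\seq{\mu_n}$ is $c_1^2$ rather than $c_1$ is well taken: the paper itself passes $c_1$ to Lemma~\ref{res:recineq:first} and so the upper limit of the integral in the displayed rate reads $c_1$ where $c_1^2$ would be the literal bound (and the displayed rate also drops the trailing ``$+1$'' of Lemma~\ref{res:recineq:first}); these are minor slips in the paper's stated formula, not in your argument.
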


\begin{proof}
We start by using the property of $\seq{A_n}$ and observing that for any $\delta>0$, since $\norm{x_n-q}\leq c_1$ we have 
\begin{equation*}
\begin{aligned}
\pair{A_nx_n-q,J(x_n-q)}&\leq \norm{x_n-q}^2-\psi(\norm{x_n-q})+\frac{\delta}{4}\\
&=\pair{x_n-q,J(x_n-q)}-\psi(\norm{x_n-q})+\frac{\delta}{4}
\end{aligned}
\end{equation*}
and therefore 
\begin{equation}
\label{eqn:dweakly:0}
\pair{Ax_n-x_n,J(x_n-q)}\leq -\psi(\norm{x_n-q})+\frac{\delta}{4}
\end{equation}
$n\geq\sigma(\delta/4,c_1)$. Independently of this, we observe that whenever
\begin{equation}
\label{eqn:dweakly:1}
\norm{J(x_{n+1}-q)-J(x_n-q)}\leq \frac{\delta}{4c_2}
\end{equation}
it follows that
\begin{equation}
\label{eqn:dweakly:2}
\begin{aligned}
&\pair{Ax_n-x_n,J(x_{n+1}-q)-J(x_n-q)}\\
&\leq \norm{Ax_n-x_n}\cdot \norm{J(x_{n+1}-q)-J(x_n-q)}\\
&\leq c_2\norm{J(x_{n+1}-q)-J(x_n-q)}\leq \frac{\delta}{4}
\end{aligned}
\end{equation}
Now, noting that $\norm{x_n-q},\norm{x_{n+1}-q}\leq c_1$, by uniform continuity of $J$ we have that (\ref{eqn:dweakly:1}) holds as long as
\begin{equation}
\label{eqn:dweakly:3}
\norm{x_{n+1}-x_n}\leq\omega\left(c_1,\frac{\delta}{4c_2}\right)
\end{equation}
Observing that
\begin{equation*}
\norm{x_{n+1}-x_n}=\norm{\alpha_n(A_nx_n-x_n)}\leq \alpha_nc_2
\end{equation*}
and recalling that $\alpha_n\to 0$ with rate $f$, it follows that (\ref{eqn:dweakly:3}) and therefore (\ref{eqn:dweakly:1}) and (\ref{eqn:dweakly:2}) hold whenever
\begin{equation*}
n\geq f\left(\frac{1}{c_2}\cdot \omega\left(c_1,\frac{\delta}{4c_2}\right)\right)
\end{equation*}
Putting together (\ref{eqn:dweakly:0}) and (\ref{eqn:dweakly:2}) we see that
\begin{equation}
\label{eqn:dweakly:4}
\begin{aligned}
&\pair{A_nx_n-x_n,J(x_{n+1}-q)}\\
&\leq \pair{A_nx_n-x_n,J(x_n-q)}+\pair{A_nx_n-x_n,J(x_{n+1}-q)-J(x_n-q)}\\
&\leq -\psi(\norm{x_n-q})+\frac{\delta}{2}
\end{aligned}
\end{equation}
for all $n\geq N_{\omega,c_1,c_2,f,\sigma}(\delta)$ where the latter is defined as in the statement of the theorem. Thus for all $n$ in this range we have, using (\ref{eqn:dweakly:4}) together with (\ref{eqn:geom}):
\begin{equation}
\label{eqn:dweakly:rec}
\begin{aligned}
\norm{x_{n+1}-q}^2&=\norm{x_n-q+\alpha_n(A_nx_n-x_n)}\\
&\leq \norm{x_n-q}^2+2\alpha_n\pair{A_nx_n-x_n,J(x_{n+1}-q)}\\
&\leq\norm{x_n-q}^2-2\alpha_n\psi(\norm{x_n-q})+\alpha_n\delta
\end{aligned}
\end{equation}
and therefore $\mu_n:=\norm{x_n-q}^2$ satisfies
\begin{equation*}
\mu_{n+1}\leq \mu_n-\alpha_n\bar{\psi}(\mu_n)+\alpha_n\delta
\end{equation*}
for $\bar{\psi}(t):=2\psi(\sqrt{t})$. Applying Lemma \ref{res:recineq:first} to $\seq{\mu_n}$ on parameters $\bar{\psi}$, $\alpha$, $r$, $N_{\omega,c_1,c_2,f,\sigma}$ and $c_1$ gives the following rate of convergence for $\norm{x_n-q}^2\to 0$:
\begin{equation*}
\varepsilon\mapsto r\left(N_{\omega,c_1,c_2,f,\sigma}\left(\frac{1}{2}\min\left\{\bar\psi\left(\frac{\varepsilon}{2}\right),\frac{\varepsilon}{\alpha}\right\}\right),2\int_{\varepsilon/2}^{c_1} \frac{dt}{\bar\psi(t)} \right)
\end{equation*}
We can adjust this to a rate of convergence for $\norm{x_n-q}\to 0$ via the substitution $\varepsilon\mapsto \varepsilon^2$, and writing out the definition of $\bar\psi$ in full we obtain the rate stated in the theorem.
%
%
\end{proof}

\begin{remark}
\label{rem:retraction}
In both Theorem \ref{res:dweakly}, we can easily replace the basic Mann scheme (\ref{eqn:mann}) with the following variant:
\begin{equation*}
x_{n+1}=P((1-\alpha_n)x_n+\alpha_nA_nx_n)
\end{equation*}
where $P:X\to E$ is a nonexpansive retraction, that is, a mapping satisfying $P^2=P$ and $\norm{Px-Py}\leq \norm{x-y}$ for all $x,y\in X$. We can do this by simply setting $y_n:=(1-\alpha_n)x_n+\alpha_nA_nx_n$ and then replacing $x_{n+1}$ with $y_n$ throughout, and finally for (\ref{eqn:dweakly:rec}) observing that
\begin{equation*}
\begin{aligned}
\norm{x_{n+1}- q}^2&=\norm{P((1-\alpha_n)x_n+\alpha_nA_nx_n)-Pq}\\
&\leq \norm{x_n-q+\alpha_n(A_nx_n-x_n)}\\
&\leq \norm{x_n-q}^2-2\alpha_n\pair{A_nx_n-x_n,J(y_{n+1}-q)}
\end{aligned}
\end{equation*}

\end{remark}

\begin{remark}
\label{rem:unifsmooth}
It has been shown in \cite{kohlenbach-leustean:12:modulus} that in the case that $X$ is uniformly smooth, a modulus of uniform continuity for $J$ can be constructed in terms of a modulus $\tau$ of uniform smoothness for $X$. More specifically, we define
\begin{equation*}
\omega_\tau(d,\varepsilon):=\frac{\varepsilon^2}{12d}\cdot \tau\left(\frac{\varepsilon}{2d}\right), \ \ \ \varepsilon\in (0,2], d\geq 1
\end{equation*}
with $\omega_\tau(d,\varepsilon):=\omega_\tau(1,\varepsilon)$ for $d<1$ and $\omega_\tau(d,\varepsilon):=\omega_\tau(d,2)$ for $\varepsilon>2$. Thus the requirement in Theorem \ref{res:dweakly} to provide an explicit modulus of uniform continuity for $J$ can be replaced by instead providing a modulus of uniform smoothness for $X$.
\end{remark}

\subsection{$d$-weakly contractive mappings uniformly smooth spaces}
\label{sec:cs2:concrete}

It is clear that if $T:E\to X$ is a $d$-weakly contractive mapping in the sense of Definition \ref{defn:dweakly} then $\seq{T}$ is quasi asymptotically weakly contractive relative to any fixpoint $q$ of $T$ the sense of Theorem \ref{res:dweakly} with modulus $\sigma(\delta,b):=0$. In fact, we can give the following as a simple corollary, which forms a computational version of Theorem 3.1 of Chidume et al \cite{chidume-zegeye-aneke:02:dweakly}:
\begin{cor}
[Cf. Theorem 3.1 of \cite{chidume-zegeye-aneke:02:dweakly}]
\label{res:dweakly:chidume}
Suppose that $X$ is a uniformly smooth space equipped with a modulus $\tau$. Let $T:E\to X$ be a $d$-weakly contractive mapping w.r.t. $\psi$ (now in the sense of Definition \ref{defn:dweakly}) and suppose in addition that $\seq{x_n}$ is a sequence satisfying 
\begin{equation*}
x_{n+1}=P((1-\alpha_n)x_n+\alpha_nTx_n)
\end{equation*}
where $P:X\to E$ is some nonexpansive retraction, $\seq{\alpha_n}$ is some sequence in $(0,\alpha]$ such that $\alpha_n\to 0$ with rate of convergence $f$ and $\sum_{n=0}^\infty \alpha_n=\infty$ with rate of divergence $r$. Let $q\in E$ be a fixpoint of $T$. Then whenever $c_1,c_2>0$ are such that $\norm{x_n-q}\leq c_1$ and $\norm{Tx_n-x_n}\leq c_2$ for all $n\in\NN$, we have $\norm{x_n-q}\to 0$ with rate
\begin{equation*}
\Phi_{\tau,\psi,c_1,c_2,\alpha,f,r}(\varepsilon):=
r\left(N_{\tau,c_1,c_2,f}\left(\frac{1}{2}\min\left\{2\psi\left(\frac{\varepsilon}{\sqrt{2}}\right),\frac{\varepsilon^2}{\alpha}\right\}\right),2\int_{\varepsilon^2/2}^{c_1} \frac{dt}{2\psi(\sqrt{t})} \right)
\end{equation*}
where
\begin{equation*}
N_{\tau,c_1,c_2,f}(\delta):=f\left(\frac{1}{c_2}\cdot \omega_\tau\left(c_1,\frac{\delta}{4c_2}\right)\right)   
\end{equation*}
for $\omega_\tau$ defined as in Remark \ref{rem:unifsmooth}.
\end{cor}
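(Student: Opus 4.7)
The plan is to derive this corollary as a direct instance of Theorem \ref{res:dweakly} by making three observations. First, I would note that if $T$ is $d$-weakly contractive with respect to $\psi$ in the sense of Definition \ref{defn:dweakly}, and $q$ is any fixpoint of $T$, then taking $A_n := T$ uniformly in $n$, the sequence $\seq{A_n}$ is quasi asymptotically $d$-weakly contractive with respect to $q$ with the trivial modulus $\sigma(\delta,b) := 0$: indeed, selecting $j \in J(x-q)$ as guaranteed by Definition \ref{defn:dweakly} immediately yields $|\pair{Tx - q, J(x-q)}| = |\pair{Tx - Tq, j}| \leq \norm{x-q}^2 - \psi(\norm{x-q})$ for all $x \in E$, so the asymptotic inequality holds with any $\delta > 0$ for all $n \geq 0$.

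Next, since $X$ is uniformly smooth with modulus $\tau$, Remark \ref{rem:unifsmooth} supplies a modulus of uniform continuity $\omega_\tau$ for the (single-valued) duality selection map $J$, in the sense of Definition \ref{defn:selectionmap}. This gives us exactly the parameter $\omega$ required by Theorem \ref{res:dweakly}.

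Finally, the iterative scheme uses a nonexpansive retraction $P$, which does not fit the raw scheme (\ref{eqn:mann}) of Theorem \ref{res:dweakly} directly, but is accommodated by Remark \ref{rem:retraction}: the argument there replaces $x_{n+1}$ with the intermediate point $y_n := (1-\alpha_n)x_n + \alpha_n T x_n$ wherever needed, exploiting $\norm{Px_n - Pq} \leq \norm{y_n - q}$ and $Pq = q$ since $q$ is a fixpoint lying in $E$. Applying Theorem \ref{res:dweakly} in this modified form to the data $(A_n = T, \sigma \equiv 0, \omega = \omega_\tau, c_1, c_2, \alpha, f, r)$ produces the rate $\Phi_{\omega_\tau,\psi,c_1,c_2,\alpha,f,r,0}$; substituting $\sigma(\delta/4, c_1) = 0$ into the definition of $N_{\omega,c_1,c_2,f,\sigma}$ collapses the maximum to its second term, yielding exactly the simplified $N_{\tau,c_1,c_2,f}(\delta) = f\bigl(\tfrac{1}{c_2}\omega_\tau(c_1, \tfrac{\delta}{4c_2})\bigr)$ of the corollary.

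There is no genuine obstacle here — the work has already been done in Theorem \ref{res:dweakly} and its two remarks. The only points worth verifying carefully in writing are that $\sigma \equiv 0$ is a legitimate modulus (trivially so), that the fixpoint $q \in E$ is preserved by $P$ so Remark \ref{rem:retraction} applies without needing $q$ to be moved, and that the final substitution into the closed-form rate goes through cleanly.
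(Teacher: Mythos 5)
Your proposal is correct and follows the paper's own proof exactly: combine Theorem \ref{res:dweakly} with Remark \ref{rem:retraction} to handle the retraction $P$, use Remark \ref{rem:unifsmooth} to obtain $\omega_\tau$ from the uniform smoothness modulus $\tau$, and observe that the constant sequence $\seq{T}$ is quasi asymptotically $d$-weakly contractive with the trivial modulus $\sigma(\delta,b):=0$ since $q$ is a fixpoint of $T$. The paper's proof is terser but makes the same three observations.
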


\begin{proof}
This follows directly from Theorem \ref{res:dweakly} and Remark \ref{rem:retraction}, observing that for $q$ a fixpoint of $T$ the constant sequence $\seq{T}$ is quasi asymptotically $d$-weakly contractive with modulus $\sigma(\delta,b):=0$.
\end{proof}
\begin{remark}
Corollary \ref{res:dweakly:chidume} includes an implicit assumption that the sequences $\seq{\norm{x_n-q}}$ and $\seq{\norm{Tx_n-x_n}}$ are bounded, whereas the proof of Theorem 3.1 of \cite{chidume-zegeye-aneke:02:dweakly} demonstrates that these sequences are always bounded, and so $c_1,c_2$ are guaranteed to exist in this case. However, the rate of convergence we provide remains valid, and is in any case dependent on upper bounds for the sequences, independent of whether or not their existence is required in advance to establish convergence.
\end{remark}

\section{Case study 3: Perturbed Mann schemes}
\label{sec:cs3}

In our final and most complex case study, we take as inspiration a paper of Alber, Reich and Yao \cite{alber-reich-yao:03:weaklycontractive}, where approximation sequences $\seq{z_n}$ to weakly contractive mappings $T:E\to X$ are studied for which $z_n$ is projected onto some $E_n\subseteq E$. To establish convergence we require that the perturbed sets $\seq{E_n}$ approach $E$ in the uniform Hausdorff metric, and that the projection operators are sunny nonexpansive retractions. We start off by introducing these notions and providing some key computational lemmas that will be needed to establish rates of convergence.

The Hausdorff distance between two (non-empty) subsets of $X$ is defined by
\begin{equation*}
H(P,Q):=\max\left\{\sup_{x\in P}\inf_{y\in Q}\norm{x-y},\sup_{y\in Q}\inf_{x\in P}\norm{x-y}\right\}
\end{equation*}  
Following \cite{kohlenbach-powell:20:accretive}, we represent the Hausdorff distance via an abstract predicate $H^\ast$, defined as follows:
\begin{defn}
\label{defn:hausdorffpred}
For $P,Q\subseteq X$ and $a>0$ say that $H^\ast[P,Q,a]$ is true iff both
\begin{equation*}
\forall x\in P\; \exists y\in Q(\norm{x-y}\leq a)\mbox{ \ \ and \ \  }\forall y\in Q\; \exists x\in P(\norm{x-y}\leq a)
\end{equation*}
\end{defn}
\begin{lem}
\label{res:hausdorff:comp}
If $H(P,Q)<a$ then $H^\ast[P,Q,a]$ holds.
\end{lem}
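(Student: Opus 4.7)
The plan is simply to unfold the two definitions and extract witnesses from the strict inequality $H(P,Q)<a$. By definition of $\max$, the hypothesis splits into the two inequalities
\begin{equation*}
\sup_{x\in P}\inf_{y\in Q}\norm{x-y}<a \quad\text{and}\quad \sup_{y\in Q}\inf_{x\in P}\norm{x-y}<a,
\end{equation*}
and our goal $H^\ast[P,Q,a]$ is the conjunction of two symmetric statements, so it suffices to verify each conjunct using the corresponding inequality above.

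For the first conjunct, I would fix an arbitrary $x\in P$. Then by definition of the supremum,
\begin{equation*}
\inf_{y\in Q}\norm{x-y}\leq \sup_{x'\in P}\inf_{y\in Q}\norm{x'-y}<a,
\end{equation*}
so $\inf_{y\in Q}\norm{x-y}<a$. Since the infimum is strictly less than $a$, the standard characterisation of infima yields some $y\in Q$ with $\norm{x-y}<a$, and in particular $\norm{x-y}\leq a$, as required. The second conjunct of $H^\ast[P,Q,a]$ follows by an identical argument with the roles of $P$ and $Q$ swapped.

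There is no real obstacle here; the only point worth noting is that the strictness in the hypothesis $H(P,Q)<a$ is precisely what allows us to extract an actual element of $Q$ at distance at most $a$ from $x$ (an infimum need not be attained). If the hypothesis were only $H(P,Q)\leq a$, then one would need to weaken the conclusion (for instance, to the existence of $y\in Q$ with $\norm{x-y}\leq a+\varepsilon$ for every $\varepsilon>0$), which is exactly why the lemma is stated with a strict inequality.
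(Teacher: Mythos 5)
Your proof is correct and follows essentially the same route as the paper: split the hypothesis via the definition of the Hausdorff distance as a max, pass from the supremum bound to a bound on each infimum, and then extract a witness from the strict inequality on the infimum (the paper phrases this last step as a short proof by contradiction, you invoke the standard characterisation of infima; these are the same argument). Your closing observation about why strictness is needed matches the role it plays in the paper's proof.
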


\begin{proof}
From $\sup_{x\in P}\inf_{y\in Q}\norm{x-y}<a$ we infer that for all $x\in P$:
\begin{equation*}
\inf_{y\in Q}\norm{x-y}<a
\end{equation*}
Supposing for contradiction that for all $y\in Q$ we had $\norm{x-y}\geq a$, it would follow that $\inf_{y\in Q}\norm{x-y}\geq a$, so therefore for each $x\in P$ there must exists at least one $y\in Q$ with $\norm{x-y}<a$. Showing that for any $y\in Q$ there exists some $x\in P$ with $\norm{x-y}<a$ is entirely analogous, and therefore $H^\ast[P,Q,a]$ holds.
\end{proof}

Lemma \ref{res:hausdorff:comp} is useful as it shows us that we can replace assumptions involving the Hausdorff metric with assumptions phrased in terms of the simpler predicate $H^\ast$ (which does not involve infima or suprema, nor indeed any requirement that the Hausdorff metric be well-defined). We now turn to the notion of a sunny nonexpansive retraction (\cite{bruck:73:nonexpansive:projections,goebel-reich:84:book}) -- which has been recently studied from the perspective of proof mining in \cite{kohlenbach-sipos:21:sunny} -- and give a quantitative version of a key lemma from \cite{alber-reich-yao:03:weaklycontractive} in which we make use of our Hausdorff predicate. We first require a definition which gives a characterisation of a sunny expansiveness which will be crucial in what follows.
\begin{defn}[cf. \cite{alber-reich-yao:03:weaklycontractive}]
\label{defn:retractions}
Let $E$ be a nonempty, closed convex subset of a Banach space $X$. A nonexpansive retraction $Q:X\to E$ is called sunny if for all $x\in X$ and $t\geq 0$ we have
\begin{equation*}
Q(Qx+t(x-Qx))=Qx
\end{equation*}
Moreover, if $X$ is smooth, $Q$ is sunny nonexpansive iff for all $x\in X$ and $y\in E$:
\begin{equation}
\label{eqn:sunny}
\pair{x-Qx,J(y-Qx)}\leq 0
\end{equation}
\end{defn}
In the remainder of this section we will work in a space $(X,J)$ with a uniformly continuous duality selection map, and \emph{define} a nonexpansive retraction $Q:X\to E$ to be one which satisfies (\ref{eqn:sunny}) for all $x\in X$ and $y\in E$ (note that the assumption that $E$ is closed and convex is no longer used, though in the case where it is our definition will then match up to the usual definition.

With all this now in place, we now give a abstract formulation of Lemma 3.4 of \cite{alber-reich-yao:03:weaklycontractive} in terms of our moduli:
\begin{lem}
\label{lem:sunnyhaus}
Let $(X,J)$ be a space with a uniformly continuous duality selection map with modulus of continuity $\omega$, and $Q_1:X\to E_1$, $Q_2:X\to E_2$ be sunny nonexpansive retractions with $\norm{Q_10},\norm{Q_20}\leq d$. Pick any $\varepsilon,b>0$ and define 
\begin{equation*}
R:=2(2b+d)+1 \mbox{ \ \ and \ \ } a:=\min\left\{1,\omega\left(R,\frac{\varepsilon}{R}\right)\right\}
\end{equation*}
Then $H^\ast[E_1,E_2,a]$ implies that for any $x\in X$ with $\norm{x}\leq b$ we have 
\begin{equation*}
\norm{Q_1x-Q_2x}^2\leq\varepsilon
\end{equation*}
\end{lem}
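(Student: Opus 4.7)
The plan is to exploit the characterization \eqref{eqn:sunny} of sunny nonexpansive retractions, combined with the approximation provided by $H^\ast[E_1,E_2,a]$ and the uniform continuity of $J$. First I would collect the relevant norm bounds: since $Q_i$ is nonexpansive, $\norm{Q_ix}\leq \norm{Q_ix-Q_i0}+\norm{Q_i0}\leq b+d$, which gives $\norm{x-Q_ix}\leq 2b+d$ and $\norm{Q_1x-Q_2x}\leq 2(b+d)$. These quantities, together with $y_i$ to be chosen momentarily, will all lie in a ball of radius $R$; the $+1$ in the definition $R=2(2b+d)+1$, together with $a\leq 1$, provides precisely the slack needed.

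Next I would decompose the target quantity. Writing $\norm{Q_1x-Q_2x}^2=\pair{Q_1x-Q_2x,J(Q_1x-Q_2x)}$ and using that in a (uniformly) smooth space $J(-z)=-Jz$, one obtains
\begin{equation*}
\norm{Q_1x-Q_2x}^2=\pair{x-Q_1x,J(Q_2x-Q_1x)}+\pair{x-Q_2x,J(Q_1x-Q_2x)}.
\end{equation*}
If we had $Q_2x\in E_1$ and $Q_1x\in E_2$, both summands would be $\leq 0$ by \eqref{eqn:sunny}. Since this is not exactly the case, I would invoke $H^\ast[E_1,E_2,a]$ to pick $y_2\in E_1$ with $\norm{Q_2x-y_2}\leq a$ and $y_1\in E_2$ with $\norm{Q_1x-y_1}\leq a$, and rewrite each summand as a ``good'' inner product plus an error, e.g.
\begin{equation*}
\pair{x-Q_1x,J(Q_2x-Q_1x)}=\pair{x-Q_1x,J(y_2-Q_1x)}+\pair{x-Q_1x,J(Q_2x-Q_1x)-J(y_2-Q_1x)},
\end{equation*}
where the first piece is $\leq 0$ by \eqref{eqn:sunny} applied to $Q_1$ at the point $y_2\in E_1$.

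The remaining step is to estimate the error term. By Cauchy--Schwarz it is bounded by $(2b+d)\cdot\norm{J(Q_2x-Q_1x)-J(y_2-Q_1x)}$. Now $\norm{Q_2x-Q_1x}\leq 2(b+d)\leq R$ and $\norm{y_2-Q_1x}\leq a+2(b+d)\leq 1+2(b+d)\leq R$, while $\norm{(Q_2x-Q_1x)-(y_2-Q_1x)}=\norm{Q_2x-y_2}\leq a\leq \omega(R,\varepsilon/R)$; so uniform continuity of $J$ on the ball of radius $R$ yields $\norm{J(\cdot)-J(\cdot)}\leq \varepsilon/R$. Since $(2b+d)/R<1/2$, the error is strictly less than $\varepsilon/2$. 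Treating the second summand symmetrically (with $y_1$ in place of $y_2$ and $Q_2$ in place of $Q_1$) gives another contribution $<\varepsilon/2$, and adding the two yields $\norm{Q_1x-Q_2x}^2\leq \varepsilon$.

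The main obstacle is simply the bookkeeping needed to ensure that all the vectors that $J$ is applied to genuinely lie inside the common ball of radius $R$, so that the modulus $\omega(R,\cdot)$ is applicable, and that the resulting factor $(2b+d)/R$ falls strictly below $1/2$; the choice of $R$ is engineered so that both conditions hold automatically, and beyond this the argument is a direct application of \eqref{eqn:sunny}, the $H^\ast$ predicate, and the definition of $\omega$.
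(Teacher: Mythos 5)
Your proposal is correct and follows essentially the same route as the paper: decompose $\norm{Q_1x-Q_2x}^2$ into two duality pairings, approximate $Q_1x$ (respectively $Q_2x$) by a nearby point of the other set via $H^\ast$, discard the resulting nonpositive sunny term, and bound the error using the uniform continuity of $J$ on the ball of radius $R$. The only cosmetic difference is that you write out the decomposition of the squared norm up front and then estimate both summands, while the paper estimates each pairing first and assembles them at the very end; the bookkeeping with $R$ and $a\leq 1$ is the same.
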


\begin{proof}
Take $x\in X$ with $\norm{x}\leq b$ and let $y_1:=Q_1x\in E_1$ and $y_2:=Q_2x\in E_2$. Then by $H^\ast[E_1,E_2,a]$ there exists some $v_2\in E_2$ such that $\norm{y_1-v_2}\leq a$. We have
\begin{equation}
\label{eqn:sunnyhaus0}
\begin{aligned}
&\pair{x-y_2,J(y_1-y_2)}\\
&= \pair{x-y_2,J(v_2-y_2)}+\pair{x-y_2,J(y_1-y_2)-J(v_2-y_2)}\\
&\leq \norm{x-y_2}\norm{J(y_1-y_2)-J(v_2-y_2)}
\end{aligned}
\end{equation}
where for the second inequality we use $\pair{x-y_2,J(v_2-y_2)}\leq 0$ which follows directly from (\ref{eqn:sunny}) and the assumption that $Q_2$ is sunny. We now observe that
\begin{equation*}
\norm{x-y_2}\leq \norm{x-Q_20}+\norm{Q_20-Q_2x}\leq 2\norm{x}+\norm{Q_20}\leq 2b+d
\end{equation*}
and similarly $\norm{x-y_1}\leq 2b+d$. From this it follows that $\norm{y_1-y_2}\leq 2(2b+d)$ and, using $\norm{y_1-v_2}\leq a\leq 1$, we have
\begin{equation*}
\norm{v_2-y_2}\leq \norm{v_2-y_1}+{y_1-y_2}\leq a+2(2b+d)\leq R
\end{equation*}
Using now that $\norm{y_1-y_2},\norm{v_2-y_2}\leq R$ and $\norm{y_1-v_2}\leq a\leq \omega\left(R,\frac{\varepsilon}{R}\right)$, since $\omega$ is a modulus of continuity for $J$ in the sense of Definition \ref{defn:selectionmap} we have
\begin{equation*}
\norm{J(y_1-y_2)-J(v_2-y_2)}\leq \frac{\varepsilon}{R}
\end{equation*}
and therefore from (\ref{eqn:sunnyhaus0}):
\begin{equation*}
\pair{x-y_2,J(y_1-y_2)}\leq (2b+d)\cdot \frac{\varepsilon}{R}\leq \frac{\varepsilon}{2}
\end{equation*}
Entirely analogously, by $H^\ast[E_1,E_2,a]$ there exists some $v_1\in E_1$ such that $\norm{v_1-y_2}\leq a$ and using a symmetric argument we can show that
\begin{equation*}
\pair{x-y_1,J(y_2-y_1)}\leq\frac{\varepsilon}{2}
\end{equation*}
and therefore
\begin{equation*}
\begin{aligned}
\norm{y_1-y_2}^2&=\pair{y_1-y_2,J(y_1-y_2)}\\
&=\pair{y_1-x,J(y_1-y_2)}+\pair{x-y_2,J(y_1-y_2)}\leq\varepsilon
\end{aligned}
\end{equation*}
This completes the proof.
\end{proof}
\begin{remark}
The above lemma essentially provides a \emph{modulus of uniqueness} for the implication $H(E_1,E_2)=0\implies Q_1=Q_2$, in the sense that it tells us exactly how close $E_1$ and $E_2$ need to be in order to make $\norm{Q_1x-Q_2x}$ arbitrarily small. The extraction of moduli of uniqueness form an important subclass of applications in proof mining, see \cite[Chapters 15-16]{kohlenbach:08:book} for examples.
\end{remark}

We are now ready to present the main quantitative result:
\begin{thm}
\label{res:perturbed}
Suppose that $(X,J)$ is a space equipped with a uniformly continuous duality selection map, with modulus of continuity $\omega$. Let $\seq{A_n}$ be a sequence of mappings $A_n:E_n\to X$ and $\psi:[0,\infty)\to [0,\infty)$ be a nondecreasing function with $\psi(0)=0$. Suppose that $\seq{A_n}$ are asymptotically $\psi$-weakly contractive with modulus $\sigma$ in the sense that for all $x,y\in X$ we have
\begin{equation*}
\norm{x-y}\leq b\implies \norm{A_nx-A_ny}\leq \norm{x-y}-\psi(\norm{x-y})+\delta
\end{equation*}
for all $\delta,b>0$ and $n\geq \sigma(\delta,b)$. Suppose in addition that $\seq{z_n}$ is a sequence satisfying
\begin{equation*}
\label{eqn:mann:perturbed}
z_{n+1}=Q_n((1-\alpha_n)z_n+\alpha_nA_nz_n)
\end{equation*}
where $\seq{\alpha_n}$ is some sequence in $(0,\alpha]$ such that $\sum_{n=0}^\infty\alpha_n=\infty$ with rate $r$, and $Q_n:X\to E_n$ is some sequence of sunny nonexpansive retractions. Let $Q:X\to E$ be a sunny nonexpansive retraction and suppose that $H^\ast[E_n,E,a_n]$ holds for all $n\in\NN$, where $\seq{a_n}$ is a sequence in $(0,1)$. Suppose that $d>0$ is such that $Q0\leq d$ and $Q_n0\leq d$ for all $n\in\NN$. Fix some $q\in E$ such that $\norm{A_nq-q}\to 0$ with rate $f$. Finally, let $\seq{x_n}$ be any non-perturbed sequence defined by
\begin{equation*}
x_{n+1}=Q((1-\alpha_n)x_n+\alpha_nA_nx_n)
\end{equation*} 
and suppose that $\seq{z_n}$ and $\seq{x_n}$ are bounded. Then $\seq{\norm{A_nz_n}}$ is also bounded, and we choose $c_1,c_2,c_3,c_4>0$ be such that $\norm{z_n}\leq c_1$, $\norm{A_nz_n}\leq c_2$, $\norm{x_n-q}\leq c_3$, $\norm{x_n-z_n}\leq c_4$ for all $n\in\NN$. Let $R:=2(2(c_1+\alpha c_2)+d)+1$. Then whenever $h:(0,\infty)\to \NN$ is such that for any $\delta>0$ we have
\begin{equation}
\label{eqn:aconv}
a_n\leq \omega\left(R,\frac{(\alpha_n\delta)^2}{4R}\right)
\end{equation}
for all $n\geq h(\delta)$, then $\norm{z_n-q}\to 0$ with rate of convergence

\begin{equation*}
\begin{aligned}
&\Phi_{\psi,c_3,c_4,c_5,\alpha,r,\sigma,f,h}(\varepsilon):=\\
&r\left( N_{c_3,c_4,\sigma,f,h}\left(\frac{1}{2}\min\left\{\psi\left(\frac{\varepsilon}{4}\right),\frac{\varepsilon}{2\alpha}\right\}\right),2\int_{\varepsilon/4}^{\max\{c_3,c_4\}} \frac{dt}{\psi(t)} \right)+1
\end{aligned}
\end{equation*}
where
\begin{equation*}
N_{c_3,c_4,\sigma,f,h}(\delta):=\max\left\{\sigma\left(\frac{\delta}{2},\max\{c_3,c_4\}\right),f\left(\frac{\delta}{2}\right),h(\delta)\right\}
\end{equation*}

%
%
%
%
%
%
\end{thm}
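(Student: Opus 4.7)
The plan is to establish $\norm{z_n-q} \to 0$ by using the triangle inequality
\begin{equation*}
\norm{z_n-q} \leq \norm{z_n-x_n} + \norm{x_n-q},
\end{equation*}
where $\seq{x_n}$ plays the role of a non-perturbed companion, and by deriving a rate of convergence for each summand separately via Lemma \ref{res:recineq:first}. Substituting $\varepsilon \mapsto \varepsilon/2$ in each rate and taking maxima over the relevant moduli then yields the stated unified rate, with upper bound $\max\{c_3,c_4\}$ and combined modulus $N_{c_3,c_4,\sigma,f,h}$.

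For the first summand, since $q=Qq$ and $Q$ is nonexpansive, expanding
\begin{equation*}
\norm{x_{n+1}-q} \leq (1-\alpha_n)\norm{x_n-q} + \alpha_n\norm{A_n x_n - A_n q} + \alpha_n\norm{A_n q - q}
\end{equation*}
and then using asymptotic weak contractivity of $\seq{A_n}$ at bound $c_3$ together with the rate $f$ for $\norm{A_nq-q}\to 0$, one obtains a recursive inequality of the form (\ref{eqn:recineq:abs}) for $n\geq \max\{\sigma(\delta/2,c_3), f(\delta/2)\}$, to which Lemma \ref{res:recineq:first} applies with upper bound $c_3$.

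For the second summand, setting $w_n := (1-\alpha_n)z_n + \alpha_n A_n z_n$ and $v_n := (1-\alpha_n)x_n + \alpha_n A_n x_n$, I would split
\begin{equation*}
\norm{z_{n+1}-x_{n+1}} = \norm{Q_n w_n - Q v_n} \leq \norm{Q_n w_n - Q w_n} + \norm{w_n - v_n},
\end{equation*}
using nonexpansiveness of $Q$. The term $\norm{w_n - v_n}$ is controlled by weak contractivity at bound $c_4$, contributing at most $\norm{z_n-x_n}-\alpha_n\psi(\norm{z_n-x_n}) + \alpha_n\delta/2$ for $n\geq \sigma(\delta/2,c_4)$. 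For the retraction term I would invoke Lemma \ref{lem:sunnyhaus}: since $\norm{w_n}\leq c_1+\alpha c_2$ (whence the particular value of $R$ chosen in the theorem), applying the lemma with target accuracy $(\alpha_n\delta/2)^2$ yields $\norm{Q_n w_n - Q w_n} \leq \alpha_n\delta/2$ precisely under the requirement $a_n \leq \omega(R,(\alpha_n\delta)^2/(4R))$, which is the content of hypothesis (\ref{eqn:aconv}) for $n\geq h(\delta)$. Combining the two contributions gives a recursive inequality of the form (\ref{eqn:recineq:abs}) for $\norm{z_n-x_n}$, and a second application of Lemma \ref{res:recineq:first} with bound $c_4$ completes the step.

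The main obstacle is tuning the retraction perturbation $\norm{Q_n w_n - Q w_n}$ to be of order $\alpha_n\delta/2$ rather than merely $\delta$: it is exactly this factor of $\alpha_n$ which allows the perturbation to be absorbed into the $\alpha_n\psi(\norm{z_n-x_n})$ term and to close the recursion in the standard form of Lemma \ref{res:recineq:first}. The somewhat intricate quadratic dependence on $\alpha_n\delta$ appearing in hypothesis (\ref{eqn:aconv}) is engineered precisely so that the squared estimate provided by Lemma \ref{lem:sunnyhaus} produces this $\alpha_n$ factor after taking square roots.
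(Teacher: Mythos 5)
Your proposal is correct and follows essentially the same approach as the paper's proof: the decomposition $\norm{z_n-q}\leq\norm{z_n-x_n}+\norm{x_n-q}$, two applications of Lemma~\ref{res:recineq:first}, the split $\norm{Q_nw_n-Qv_n}\leq\norm{Q_nw_n-Qw_n}+\norm{Qw_n-Qv_n}$ with Lemma~\ref{lem:sunnyhaus} applied at accuracy $(\alpha_n\delta/2)^2$ to handle the retraction discrepancy, and the final merging of moduli by taking maxima after substituting $\varepsilon\mapsto\varepsilon/2$. The only minor items you pass over are the preliminary verification that $\seq{\norm{A_nz_n}}$ is bounded (needed for $c_2$ to exist, since this is asserted rather than hypothesised in the theorem) and the harmless w.l.o.g.\ monotonicity assumptions on $\sigma$ and $r$ that the paper uses to make the merged rate compact.
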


\begin{proof}
To show that $\seq{\norm{A_nz_n}}$ is bounded it is enough to observe that for sufficiently large $n$ we have
\begin{equation*}
\begin{aligned}
\norm{A_nz_n}&\leq \norm{A_nz_n-A_nq}+\norm{A_nq-q}+\norm{q}\\
&\leq \norm{z_n-q}-\psi(\norm{z_n-q})+1+\norm{A_nq-q}+\norm{q}
\end{aligned}
\end{equation*}
and so boundedness of $\seq{\norm{A_nz_n}}$ follows from boundedness of $\seq{\norm{z_n}}$ and $\norm{A_nq-q}\to 0$. We now start off the main proof by showing that $\norm{x_n-q}\to 0$, which is established in the standard way: Since $q\in E$ and $Q$ is a nonexpansive retraction, we observe that for any $\delta>0$ we have
\begin{equation*}
\begin{aligned}
\norm{x_{n+1}-q}&=\norm{Q((1-\alpha_n)x_n+\alpha_nA_nx_n)-Qq}\\
&\leq \norm{(1-\alpha_n)x_n+\alpha_nA_nx_n-q}\\
&\leq (1-\alpha_n)\norm{x_n-q}+\alpha_n\norm{A_nx_n-q}\\
&\leq (1-\alpha_n)\norm{x_n-q}+\alpha_n\norm{A_nx_n-A_nq}+\alpha_n\norm{A_nq-q}\\
&\leq \norm{x_n-q}-\alpha_n\psi(\norm{x_n-q})+\alpha_n\frac{\delta}{2}+\alpha_n\frac{\delta}{2}
\end{aligned}
\end{equation*}
where for the last step we require that $n\geq \sigma(\frac{\delta}{2},c_3)$ (from asymptotic contractivity of $\seq{A_n}$ with modulus $\sigma$) and $n\geq f(\frac{\delta}{2})$ (from $\norm{A_nq-q}\to 0$ with rate $f$). In other words, this holds for all $n\geq N^1_{c_3,\sigma,f}(\delta)$ where
\begin{equation*}
N^1_{c_3,\sigma,f}(\delta):=\max\left\{\sigma\left(\frac{\delta}{2},c_3\right),f\left(\frac{\delta}{2}\right)\right\}
\end{equation*}
Therefore by Lemma \ref{res:recineq:first} applied to $\mu_n:=\norm{x_n-q}$ and parameters $\psi$, $\alpha$, $r$,$N^1_{c_3,\sigma,f}$ and $c_3$ we have $\norm{x_n-q}\to 0$ with rate
\begin{equation*}
\Phi^1_{\psi,c_3,\alpha,r,\sigma,f}(\varepsilon):=r\left(N^1_{c_3,\sigma,f}\left(\frac{1}{2}\min\left\{\psi\left(\frac{\varepsilon}{2}\right),\frac{\varepsilon}{\alpha}\right\}\right),2\int_{\varepsilon/2}^{c_3} \frac{dt}{\psi(t)} \right)+1
\end{equation*}
%
We finish the proof by showing that $\norm{z_n-x_n}\to 0$. To this end, observe that
\begin{equation}
\label{eqn:perturbed2}
\begin{aligned}
\norm{z_{n+1}-x_{n+1}}=&\norm{Q_n((1-\alpha_n)z_n+\alpha_nA_nz_n)-Q((1-\alpha_n)x_n+\alpha_nA_nx_n)}\\
\leq &\norm{Q((1-\alpha_n)z_n+\alpha_nA_nz_n) -Q((1-\alpha_n)x_n+\alpha_nA_nx_n)}\\
& +\norm{Q_n((1-\alpha_n)z_n+\alpha_nA_nz_n)-Q((1-\alpha_n)z_n+\alpha_nA_nz_n)}
\end{aligned}
\end{equation}
We consider at the two summands on the right hand side in turn. Fix some $\delta>0$. First of all, since $Q$ is nonexpansive we have
\begin{equation}
\label{eqn:perturbed3}
\begin{aligned}
&\norm{Q((1-\alpha_n)z_n+\alpha_nA_nz_n) -Q((1-\alpha_n)x_n+\alpha_nA_nx_n)}\\
&\leq (1-\alpha_n)\norm{z_n-x_n}+\alpha_n\norm{A_nz_n-A_nx_n}\\
&\leq \norm{z_n-x_n}-\alpha_n\psi(\norm{z_n-x_n})+\alpha_n\delta/2
\end{aligned}
\end{equation}
for all $n\geq \sigma(\delta/2,c_4)$. We now focus on the second term and use the convergence property of $\seq{E_n}$. More specifically, our aim to is show that when $n$ is sufficiently large we have:
\begin{equation}
\label{eqn:perturbed4}
\norm{Q_n((1-\alpha_n)z_n+\alpha_nA_nz_n)-Q((1-\alpha_n)z_n+\alpha_nA_nz_n)}\leq\alpha_n\delta/2
\end{equation}
Noting that
\begin{equation}
\label{eqn:perturbed5}
\begin{aligned}
\norm{(1-\alpha_n)z_n+\alpha_nA_nz_n}&\leq \norm{z_n}+\alpha_n\norm{A_nz_n}\leq c_1+\alpha c_2
\end{aligned}
\end{equation}
we can apply Lemma \ref{lem:sunnyhaus} to the sunny nonexpansive retractions $Q_n$, $Q$ with 
\begin{equation*}
\begin{aligned}
x&:=(1-\alpha_n)z_n+\alpha_nA_nz_n\\
b&:=c_1+\alpha c_2\\
\varepsilon&:=\left(\frac{\alpha_n\delta}{2}\right)^2
\end{aligned}
\end{equation*}
To be more precise, by (\ref{eqn:perturbed5}) we have $\norm{x}\leq c_1+\alpha c_2=b$, by assumption we have $Q0,Q_n0\leq d$, our definition of $R$ satisfies $R=2(2b+d)+1$, and finally the assumption (\ref{eqn:aconv}) together with $a_n\in (0,1)$ implies that $a_n\leq a:=\min\{1,\omega(R,\frac{\varepsilon}{R})\}$ for epsilon as defined above and all $n\geq h(\delta)$. Therefore from $H^\ast[E_n,E,a_n]$ we have $H^\ast[E_n,E,a]$ and thus by Lemma \ref{lem:sunnyhaus} $\norm{Q_nx-Qx}^2\leq \varepsilon$, and instantiating our values for $x$ and $\varepsilon$ we have established (\ref{eqn:perturbed4}) for all $n\geq h(\delta)$.
Now, putting together (\ref{eqn:perturbed2})--(\ref{eqn:perturbed4}) we have
\begin{equation*}
\norm{z_{n+1}-x_{n+1}}\leq \norm{z_n-x_n}-\alpha_n\psi(\norm{z_n-x_n})+\alpha_n\delta
\end{equation*}
whenever $n\geq \sigma(\frac{\delta}{2},c_4)$ and $n\geq h(\delta)$, or alternatively $n\geq N^2_{c_4,\sigma,h}(\delta)$ for
\begin{equation*}
N^2_{c_4,\sigma,h}(\delta):=\max\left\{\sigma\left(\frac{\delta}{2},c_4\right),h(\delta)\right\}
\end{equation*}
Therefore Lemma \ref{res:recineq:first} can be applied again, this time to $\mu_n:=\norm{z_n-x_n}$ and parameters $\psi$, $\alpha$, $r$, $N^2_{c_4,\sigma,h}$ and $c_4$ to establish that $\norm{z_n-x_n}\to 0$ with rate
\begin{equation*}
\Phi^2_{\psi,c_4,\alpha,r,N_1}(\varepsilon):=r\left(N^2_{c_4,\sigma,h}\left(\frac{1}{2}\min\left\{\psi\left(\frac{\varepsilon}{2}\right),\frac{\varepsilon}{\alpha}\right\}\right),2\int_{\varepsilon/2}^{c_4} \frac{dt}{\psi(t)} \right)+1
\end{equation*}
To complete the proof and obtain the final rate of convergence, we assume w.l.o.g. that $\sigma$ is monotone in its second argument and $r$ is monotone in both arguments (in practise this would always be the case, and this harmless assumption allows for a slightly more compact rate of convergence, which would otherwise require additional uses of the maximum operator).
First, using the monotonicity property of $\sigma$ we see that $N_{c_3,c_4,\sigma,f,h}$ as defined in the statement of the theorem satisfies $N_{c_3,c_4,\sigma,f,h}(\delta)\geq N^1_{c_4,\sigma,f}(\delta),N^2_{c_4,\sigma,h}(\delta)$ for all $\delta>0$, and using this together with monotonicity of $r$ for $\Phi(\varepsilon)$ as defined in the statement of the theorem we have (suppressing subscripts) $\Phi(\varepsilon)\geq \Phi^1(\varepsilon/2),\Phi^2(\varepsilon/2)$, and therefore for all $n\geq \Phi(\varepsilon)$ it follows that 
\begin{equation*}
\norm{z_n-q}\leq \norm{x_n-q}+\norm{z_n-x_n}\leq \frac{\varepsilon}{2}+\frac{\varepsilon}{2}=\varepsilon.
\end{equation*} 
Thus $\norm{z_n-q}\to 0$ with the stated rate of convergence.
\end{proof}
\begin{remark}
In special cases, the condition (\ref{eqn:aconv}) can be simplified. For example, if $a_n:=\frac{1}{n}$ and $\alpha_n:=\alpha$ for all $n\in\NN$, then (\ref{eqn:aconv}) automatically holds for 
\begin{equation*}
h(\delta):=\frac{1}{\omega\left(R,(\alpha\delta)^2/R\right)}
\end{equation*}
For a more detailed analysis of what this condition means in uniformly smooth spaces, see Remark \ref{rem:sqrt}.
\end{remark}
%
\subsection{Perturbed schemes for computing fixpoints of $\psi$-weakly contractive mappings}
\label{sec:cs3:concrete}
Theorem \ref{res:perturbed} is very general, and in the case where we replace $\seq{A_n}$ with a single $\psi$-weakly contractive mapping $T$ we obtain a much simpler result as a direct corollary.
\begin{cor}[cf. Theorem 3.5 of \cite{alber-reich-yao:03:weaklycontractive}]
\label{res:perturbed:concrete}
Suppose that $X$ is a uniformly smooth space with a modulus $\tau$, and suppose $\psi$ is a nondecreasing function with $\psi(0)=0$. Let $T:E\to X$ be a $\psi$-weakly contractive mapping and suppose that $\seq{z_n}$ satisfies 
\begin{equation*}
z_{n+1}=Q_n((1-\alpha_n)z_n+\alpha_nTz_n)
\end{equation*}
where $\seq{\alpha_n}$ is some sequence in $(0,\alpha]$ such that $\sum_{n=0}^\infty\alpha_n=\infty$ with rate $r$, and $Q_n:X\to E_n\subseteq E$ is some sequence of sunny nonexpansive retractions. Let $Q:X\to E$ be a sunny nonexpansive retraction and suppose that $H^\ast[E_n,E,a_n]$ holds for all $n\in\NN$, where $\seq{a_n}$ is a sequence in $(0,1)$. Suppose that $d>0$ is such that $Q0\leq d$ and $Q_n0\leq d$ for all $n\in\NN$. Let $q$ be a fixpoint of $T$. Suppose that $\seq{z_n}$ is bounded. Then $\seq{\norm{Tz_n}}$ is also bounded, and we choose $c_1,c_2,c_3>0$ such that $\norm{z_n}\leq c_1$, $\norm{Tz_n}\leq c_2$ and $\norm{z_0-q}\leq c_3$ for all $n\in\NN$. Let $R:=2(2(c_1+\alpha c_2)+d)+1$. Then whenever $h:(0,\infty)\to \NN$ is such that for any $\delta>0$ we have
\begin{equation}
\label{eqn:aconv:concrete}
a_n\leq \omega_\tau\left(R,\frac{(\alpha_n\delta)^2}{4R}\right)
\end{equation}
for all $n\geq h(\delta)$ and $\omega_\tau$ as defined in Remark \ref{rem:unifsmooth}, then $\norm{z_n-q}\to 0$ with rate of convergence
\begin{equation*}
\begin{aligned}
\Phi_{\psi,c_3,\alpha,r,h}(\varepsilon):=r\left(h\left(\frac{1}{2}\min\left\{\psi\left(\frac{\varepsilon}{4}\right),\frac{\varepsilon}{2\alpha}\right\}\right),2\int_{\varepsilon/4}^{2c_3} \frac{dt}{\psi(t)} \right)+1
\end{aligned}
\end{equation*}
\end{cor}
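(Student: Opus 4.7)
The plan is to derive this corollary as a direct specialization of Theorem \ref{res:perturbed} to the case where $\seq{A_n}$ is the constant sequence $A_n := T$. Under this choice the hypotheses of the theorem that involve $\seq{A_n}$ collapse sharply: since $T$ is $\psi$-weakly contractive, the constant sequence is trivially asymptotically $\psi$-weakly contractive with modulus $\sigma(\delta,b) := 0$, and since $q$ is a fixpoint of $T$ we have $\norm{A_nq-q}=0$ identically, so the rate $f(\delta) := 0$ works. Uniform smoothness of $X$ with modulus $\tau$ supplies, via Remark \ref{rem:unifsmooth}, a modulus of uniform continuity $\omega_\tau$ for $J$, so condition (\ref{eqn:aconv:concrete}) of the corollary is precisely condition (\ref{eqn:aconv}) of the theorem with $\omega := \omega_\tau$.

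The only remaining task is to supply the auxiliary non-perturbed sequence $\seq{x_n}$ required by the theorem, together with bounds $c_3^{\mathrm{thm}}$ and $c_4^{\mathrm{thm}}$ on $\norm{x_n-q}$ and $\norm{x_n-z_n}$ respectively. The natural choice is $x_0 := z_0$, after which a standard Mann-style computation using nonexpansiveness of $Q$ and $Qq=q$ (valid because $q\in E$) together with nonexpansiveness of $T$ (implied by $\psi$-weak contractivity) yields $\norm{x_{n+1}-q}\leq\norm{x_n-q}$, and hence $\norm{x_n-q}\leq\norm{z_0-q}\leq c_3$ for all $n$. Reading $c_3$ as a uniform bound on $\norm{z_n-q}$ (such a bound exists because $\seq{z_n}$ is assumed bounded and $c_3$ may if necessary be enlarged beyond $\norm{z_0-q}$), the triangle inequality gives $\norm{x_n-z_n}\leq 2c_3$, and so we may apply Theorem \ref{res:perturbed} with $c_3^{\mathrm{thm}} := c_3$ and $c_4^{\mathrm{thm}} := 2c_3$.

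Plugging this data into the theorem's conclusion, the auxiliary function $N_{c_3,c_4,\sigma,f,h}$ collapses to $h$ since $\sigma\equiv 0$ and $f\equiv 0$, while $\max\{c_3^{\mathrm{thm}},c_4^{\mathrm{thm}}\} = 2c_3$ furnishes the upper limit of the integral, reproducing exactly the rate $\Phi_{\psi,c_3,\alpha,r,h}$ as stated. The main subtle step is the reinterpretation of $c_3$ as controlling $\norm{z_n-q}$ uniformly in $n$ rather than merely at $n=0$; this is where the boundedness hypothesis on $\seq{z_n}$ is essential, but all other verifications are essentially mechanical bookkeeping.
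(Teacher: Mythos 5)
Your proposal is correct and follows the paper's own proof essentially step for step: specialize Theorem \ref{res:perturbed} to $A_n:=T$, observe $\sigma\equiv 0$ and $f\equiv 0$, use $\omega_\tau$ from Remark \ref{rem:unifsmooth} as the modulus of uniform continuity, set $x_0:=z_0$, derive $\norm{x_{n+1}-q}\leq\norm{x_n-q}$ from nonexpansiveness of $Q$ (and $Qq=q$ since $q\in E$) so that $\norm{x_n-q}\leq\norm{z_0-q}\leq c_3$, and then feed the data $c_3^{\mathrm{thm}}:=c_3$, $c_4^{\mathrm{thm}}:=2c_3$ into the theorem's rate, noting $N_{c_3,c_4,\sigma,f,h}$ collapses to $h$.

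The one place you deviate, deliberately, is in justifying $\norm{x_n-z_n}\leq 2c_3$. You correctly note that this requires $\norm{z_n-q}\leq c_3$ \emph{for all $n$}, not merely $n=0$, and you patch this by reading $c_3$ as a uniform bound on $\norm{z_n-q}$ (which exists because $\seq{z_n}$ is bounded). The paper's proof writes only that $c_3$ bounds $\seq{\norm{x_n-q}}$ ``which also implies that $\norm{x_n-z_n}\leq c_4$ for $c_4:=2c_3$,'' an implication that does not follow from the hypotheses as literally stated ($c_3$ is only required to dominate $\norm{z_0-q}$, and $\norm{z_n-q}$ need not stay below that value since $Q_n q\neq q$ in general). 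So your flag of this as the subtle step is apt: either $c_3$ must be taken, as you do, to bound $\norm{z_n-q}$ uniformly, or $2c_3$ must be replaced by something like $c_3+c_1+\norm{q}$ in the rate. Your reading is the cleaner fix and matches what the paper evidently intends.
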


\begin{proof}
We apply Theorem \ref{res:perturbed} with $A_n=T$. Clearly $\sigma(\delta,b)=0$ and also since $\norm{Tq-q}=0$ we have $f(\delta)=0$. Defining $\seq{x_n}$ by $x_0:=z_0$ and
\begin{equation*}
x_{n+1}=Q((1-\alpha_n)x_n+\alpha_nTx_n)
\end{equation*}
we have (using that $Q$ is a nonexpansive retraction):
\begin{equation*}
\begin{aligned}
\norm{x_{n+1}-q}&\leq (1-\alpha_n)\norm{x_n-q}+\alpha_n\norm{Tx_n-Tq}\\
&\leq \norm{x_n-q}-\alpha_n\psi(\norm{x_n-q})\\
&\leq \norm{x_n-q}
\end{aligned}
\end{equation*}
and therefore $\norm{x_n-q}\leq \norm{x_0-q}=\norm{z_0-q}\leq c_3$ for all $n\in\NN$. Thus boundedness of $\seq{x_n}$ can be derived in this case and $c_3$ is an upper bound for $\seq{\norm{x_n-q}}$, which also implies that $\norm{x_n-z_n}\leq c_4$ for $c_4:=2c_3$. Then the result follows from putting this data into Theorem \ref{res:perturbed}, and observing that in this case, $N(\delta)=h(\delta)$ and $\max\{c_3,c_4\}=2c_3$.
\end{proof}

\begin{remark}
\label{rem:sqrt}
Corollary \ref{res:perturbed:concrete} forms a quantitative version of Theorem 3.5 of \cite{alber-reich-yao:03:weaklycontractive}. The only assumption which does not obviously translate is the condition (\ref{eqn:aconv:concrete}). Let $g_X(\delta):=\rho_X(\delta)/\delta$ where $\rho_X$ is the modulus of smoothness of $X$ (cf. Section \ref{sec:basic:smooth}). It is known that when $X$ is a uniformly smooth then for $\norm{x},\norm{y}\leq d$ then
\begin{equation*}
\norm{Jx-Jy}\leq 8dg_X(16Ld^{-1}\norm{x-y})
\end{equation*}
where $1<L<1.7$ is the so-called Figiel constant \cite{figiel:76:constant}. In this sense, modulo some constants, we can informally view $g_X^{-1}$ as a modulus of continuity for the duality mapping, and thus (\ref{eqn:aconv:concrete}) would correspond to 
\begin{equation*}
g_X(a_n)\leq (\alpha_n\delta)^2
\end{equation*}
and thus $h$ would be a modulus of convergence for
\begin{equation*}
\frac{\sqrt{g_X(\alpha_n)}}{\alpha_n}\to 0 \mbox{ \ \ as \ \ }n\to\infty
\end{equation*}
which is the limiting condition given in Theorem 3.5 of \cite{alber-reich-yao:03:weaklycontractive}.
\end{remark}

\section{Concluding remarks}
\label{sec:conc}

There are several ways in which this work could be extended. The most obvious direction for future research is to consider further variants and generalisations of asymptotically weakly contractive mappings (or indeed closely related families of mappings such as the asymptotically contractive mappings, which have already been studied in applied proof theory in \cite{kohlenbach-lambov:04:asymptotically:nonexpansive,kohlenbach-leustean:10:asymp:nonexpansive:hyperbolic} but for which several interesting convergence results not hitherto analysed from a proof theoretic perspective are established in the second part of \cite{alber-chidume-zegeye:06:nonexpansive}). In this paper we have selected a handful of case studies to exemplify our approach, but a great deal of work on mappings of weakly contractive and related type has been done in the last decade, much of which may be amenable to the kind of quantitative analysis we carry out here.

Secondly, throughout this paper we have assumed that $q$ acts as a fixpoint of some limit of the sequence $\seq{A_n}$ (this is typically implicit in our main theorems but explicit in their corollaries). However, it is natural to ask whether this assumption can be weakened in some way and replaced by the existence of arbitrary $\varepsilon$-approximate fixpoints. This is done in the related papers \cite{kohlenbach-koernlein:11:pseudocontractive,kohlenbach-lambov:04:asymptotically:nonexpansive} and there are many other examples in applied proof theory, and here could potentially then lead to moduli of uniqueness of fixpoints of asymptotically weakly contractive mappings.

Finally, along with \cite{kohlenbach-koernlein:11:pseudocontractive,kohlenbach-lambov:04:asymptotically:nonexpansive,kohlenbach-leustean:10:asymp:nonexpansive:hyperbolic,kohlenbach-powell:20:accretive} our paper represents a further instance of common situation in applied proof theory where concrete numerical results have been obtained through the quantitative analysis of abstract recursive inequalities (which here form the main topic of Section \ref{sec:recursive}). We believe it would be useful to undertake a comprehensive quantitative study of these abstract recursive schemes, bringing together known results and establishing new ones. Not only would this be of interest in its own right, but it would provide a valuable repository of quantitative lemmas which could then be applied in concrete situations.\medskip

\noindent\textbf{Acknowledgements.} The authors thank Ulrich Kohlenbach for reading an earlier draft of this paper and making a number of insightful comments. The second author is a Marie Sk\l odowska-Curie fellow of INdAM.

\bibliographystyle{acm}
\bibliography{/Users/trjp20/Dropbox/tpbiblio}

\end{document}